\definecolor{input}{HTML}{303060}
\definecolor{output}{HTML}{804000}
\definecolor{string}{HTML}{A02020}
\definecolor{parent}{HTML}{A020A0}
\definecolor{function}{HTML}{205080} 
\definecolor{constructor}{HTML}{205080}
\definecolor{method}{HTML}{205080}
\definecolor{keyword}{HTML}{008000}
\definecolor{error}{HTML}{B01010}
\definecolor{comment}{HTML}{60A060}
\newcommand{\cIn}{{\color{input} \tt \phantom{C}In}:}
\newcommand{\cOut}{{\color{output} \tt Out}:}
\newcommand{\noopsort}[1]{}
\DeclareMathOperator{\val}{val}
\DeclareMathOperator{\lcm}{lcm}
\newcommand{\N}{\mathbb N}
\newcommand{\Z}{\mathbb Z}
\newcommand{\NN}{\mathbb N}
\newcommand{\ZZ}{\mathbb Z}
\newcommand{\Q}{\mathbb Q}
\newcommand{\QQ}{\mathbb Q}
\newcommand{\Qp}{\Q_p}
\newcommand{\RR}{\mathbb R}
\newcommand{\CC}{\mathbb C}
\newcommand{\FF}{\mathbb{F}}
\newcommand{\Skel}{\mathrm{Skel}}
\newcommand{\mon}{\textsc{mon}}
\newcommand{\X}{\mathbf{X}}
\newcommand{\Y}{\mathbf{Y}}
\renewcommand{\i}{\mathbf{i}}
\renewcommand{\j}{\mathbf{j}}
\renewcommand{\r}{\mathbf{r}}
\newcommand{\ifnonempty}[3]{%
  \def\tempa{}%
  \def\tempb{#1}%
  \ifx\tempa\tempb 
  #3 
  \else            
  #2
  \fi}
\newcommand{\Kz}{K^\circ}
\newcommand{\KzX}[1][]{K\{ \X \ifnonempty{#1}{; #1}{} \}^\circ}
\newcommand{\KX}[1][]{K \{ \X \ifnonempty{#1}{; #1}{} \}}
\newcommand{\Lz}{L^\circ}
\newcommand{\LzX}[1][]{L\{ \X \ifnonempty{#1}{; #1}{} \}^\circ}
\newcommand{\LX}[1][]{L \{ \X \ifnonempty{#1}{; #1}{} \}}
\newcommand{\TzX}[1][]{T\{ \X \ifnonempty{#1}{; #1}{} \}^\circ}
\newcommand{\TX}[1][]{T \{ \X \ifnonempty{#1}{; #1}{} \}}
\newcommand{\TT}{\mathbb T}
\newcommand{\TTzX}[1][]{\TT\{ \X \ifnonempty{#1}{; #1}{} \}^\circ}
\newcommand{\TTX}[1][]{\TT \{ \X \ifnonempty{#1}{; #1}{} \}}
\newcommand{\LKX}[1][]{\eta^{\NN} \KX[#1]}
\newcommand{\Kb}{\bar{K}}
\newcommand{\LT}{LT}
\newcommand{\sage}{\textsc{SageMath}\xspace}
\definecolor{purple}{rgb}{0.6,0,0.6}
\definecolor{answer}{rgb}{0,0.5,0.2}
\newtheorem{theo}{Theorem}[section]
\newtheorem{lem}[theo]{Lemma}
\newtheorem{prop}[theo]{Proposition}
\theoremstyle{definition}
\newtheorem{rem}[theo]{Remark}
\newtheorem{ex}[theo]{Example}
\newtheorem{deftn}[theo]{Definition}
\begin{document}

\title{Gröbner bases over Tate algebras}

\author{Xavier Caruso}
  \affiliation{Université de Bordeaux,
  \institution{CNRS, INRIA}
  \city{Bordeaux, France}}
  \email{xavier.caruso@normalesup.org}

\author{
Tristan Vaccon}
  \affiliation{Universit\'e de Limoges;
  \institution{CNRS, XLIM UMR 7252}
  \city{Limoges, France}  
  \postcode{87060}  
  }
  \email{tristan.vaccon@unilim.fr}

\author{
Thibaut Verron}
  \affiliation{
  \institution{Johannes Kepler University\\Institute for Algebra }
  \city{Linz, Austria}  
}
\email{thibaut.verron@jku.at}

\thanks{The third author is supported by the Austrian FWF grant F5004.}

\begin{abstract}
Tate algebras, introduced in~\cite{Tate}, are fundamental objects in the 
context of analytic geometry over the $p$-adics. Roughly speaking, they 
play the same role as polynomial algebras play in classical algebraic 
geometry. In the present article, we develop the formalism of Gröbner 
bases for Tate algebras. We prove an analogue of the Buchberger criterion 
in our framework and design a Buchberger-like and a F4-like algorithm for 
computing Gröbner bases over Tate algebras. 
An implementation in \sage is also discussed.
\end{abstract}

 \begin{CCSXML}
<ccs2012>
<concept>
<concept_id>10010147.10010148.10010149.10010150</concept_id>
<concept_desc>Computing methodologies~Algebraic algorithms</concept_desc>
<concept_significance>500</concept_significance>
</concept>
</ccs2012>
\end{CCSXML}

\ccsdesc[500]{Computing methodologies~Algebraic algorithms}


\vspace{-1.5mm}
\terms{Algorithms, Theory}

\keywords{Algorithms, Power series, Tate algebra, Gröbner bases, F4 
algorithm, $p$-adic precision}

\maketitle

\section{Introduction}

In complex geometry, the concept of analytic functions is obviously a 
notion of first importance. They form a class of functions that exhibit 
strong rigidity properties as polynomials do but, at the same time, allow 
for many analytic constructions such as taking limits, integrals, \emph{etc.} 
For this reason, they often appear as a bridge between algebra and 
analysis.

For many arithmetical applications, the completion $\Qp$ of $\QQ$ is often 
as relevant as $\RR$ or $\CC$. At the beginning of the 20th century, 
mathematicians realized that it would be quite interesting to 
develop the theory of $p$-adic analytic functions and eventually that of 
$p$-adic analytic geometry. However doing so is not an easy task owing to 
the unpleasant topology on $\Qp$, which is totally disconnected.

In~\cite{Tate}, Tate proposed to replace the classical $p$-adic topology 
by some well-suited Grothendieck topology and came up with the notion of
$p$-adic rigid variety.
Basically, the construction of rigid varieties follows that of schemes 
in algebraic geometry. They are obtained by gluing pieces ---~the 
so-called \emph{affinoids}~--- with respect to the aforementioned 
Grothendieck topology. As for affinoids, they are defined as the 
``spectrum'' of quotients of some particular algebras, called \emph{Tate 
algebras}. Thereby, Tate algebras play the same role in rigid geometry as 
polynomial algebras do in classical algebraic geometry.

From the purely algebraic point of view, Tate algebras have been widely 
studied and it has been demonstrated that they share some properties with 
polynomial algebras~\cite{BGR84}. 
However, as far as we know, the computational aspects 
of Tate algebras have not been developed yet.
This contrasts with the polynomial setting, for which we have at our 
disposal the theory of Gröbner bases~\cite{Bu65,Cox15}, which has become 
over the years a research topic on its own. 
The aim of the present article is to extend the notion of Gröbner bases
to Tate algebras.

Some difficulties need to be overcome. The most significant one is that 
elements in Tate algebras are, by nature, infinite convergent series and 
so they do not have a degree. This seems to be a serious obstruction since 
the degree is the most basic notion on which the classical theory of 
Gröbner bases is built. However, analyzing the definition of Tate 
algebras, we notice that a Tate series defines a sequence of 
\emph{polynomials} (of growing degrees) by reduction modulo $p^n$ when $n$ 
varies. In order to take advantage of this observation, we introduce an 
order on the terms taking into account the $p$-adic valuation of the 
coefficients. This order is not well-founded as classical term orders are 
usually. However, we shall prove that it is topologically well-founded (in 
the sense that every decreasing sequence tends to $0$) and that this 
weaker property is enough to guarantee the termination of our algorithms
in the finite precision model.

\vspace{1em}

\noindent
{\it Related works.}
Gröbner bases over rings ---~and in particular over $\ZZ$ 
and $\ZZ/n\ZZ$~--- have also received some attention~\cite{AL,KC}.
These developments are of course related to this article since quotients 
of Tate algebras are polynomial algebras over $\ZZ/p^n \ZZ$ for $n$ 
varying.
The main difference between our point of view and that of 
\emph{loc.~cit.} appears in the choice of the term ordering; while, in 
the theory of Gröbner bases of rings, only the degree is considered,
our setting forces us to include the valuation of the coefficients
in the definition of the term ordering. It is the ``price to pay''
to be able to pass smoothly to the completion and catch inexact 
bases as $\ZZ_p$ or~$\QQ_p$.

The special term ordering we use comes from two different 
sources. The first one 
is the theory of tropical Gröbner bases by Chan and Maclagan~\cite{CM} in 
which, for the first time, the valuation of the coefficients has been 
taken into account in the definition of the term ordering.
Later on, Vaccon and his coauthors~\cite{Vaccon:these,Vaccon:2015,Vaccon:2017,
Vaccon:2018} observed that tropical
orders are relevant for the computation of $p$-adic Gröbner bases as
they improve substantially the numerical accuracy. The definition of
our term order is the natural outcome of this observation.
Our second source of inspiration is the theory of standard bases, which 
was designed originally to ``compute'' the singularties of algebraic 
varieties~\cite{Mora,Grabe}. 
This theory introduces the notion of term order of
local/mixed type, on which the term ordering we are using in the 
present article is modeled.

\vspace{1em}

\noindent
{\it Structure of the article.}
In \S\ref{sec:Tate}, we introduce Tate algebras and develop the
theory of Gröbner bases over them. We prove in particular the
existence of finite Gröbner bases and study their structure.
\S\ref{sec:algo} is devoted to algorithms. We first design a 
variant of the Buchberger algorithm that runs over Tate algebras.
Several results towards its numerical stability are also presented.
We then move to F4-like algorithms and show how they could be
adapted to fit into the framework of Tate algebras. 
Finally, in \S\ref{sec:impl}, an implementation in \sage is
briefly discussed.

\vspace{1em}

\noindent
{\it Notations.}
The notation $\mathbb{N}$ will refer to the
set of nonnegative integers (including~$0$).
If $\mathfrak A$ is a ring, we will denote its group of invertible
elements by $\mathfrak A^\times$.
We fix a positive integer $n$.
Let $X_1, \ldots, X_n$ be $n$ variables. We
will use the short notation $\X$ for $(X_1,\dots,X_n)$. Similarly
for $\i = (i_1, \ldots, i_n) \in \NN^n$, we shall write $\X^\i$ for 
$X_1^{i_1} \cdots X_n^{i_n}$.

\section{Gröbner bases over Tate algebras}
\label{sec:Tate}

Throughout this article, we fix a field $K$ equipped
with a discrete valuation $\val : K \to \Z \sqcup \{+\infty\}$,
normalized by $\val(K^\times) = \Z$. We shall always assume that
$K$ is complete with respect to the distance defined
by $\val$. We let $\Kz$ be the subring of $K$ consisting of
elements of nonnegative valuation and $\pi$ be a uniformizer 
of $K$, that is an element of valuation $1$. We set
$\Kb = \Kz/\pi\Kz$.

A typical example of $K$ as above is the field of $p$-adic numbers 
$\QQ_p$ (equipped with the $p$-adic valuation). For this example, we 
have $\Kz = \ZZ_p$ and $\Kb = \FF_p$.

\subsection{Tate algebras}

We endow $\RR^n$ with the usual scalar product.

\begin{deftn}
\label{def:KXr}
  Let $\r = (r_{1},\dots,r_{n}) \in \QQ^{n}$.
  The \emph{Tate algebra} $\KX[\r]$ is defined by:
  \begin{equation}
    \label{eq:1}
    \KX[\r] := \left\{ \sum_{\mathbf{i} \in \NN^{n}} a_{\mathbf{i}}\X^{\mathbf{i}} 
    \text{ s.t. }
       a_{\mathbf{i}}\in K \text{ and } 
       \val(a_\i) - \r{\cdot}\i \xrightarrow[|\i| \rightarrow +\infty]{} +\infty
    \right\}
  \end{equation}
  The tuple $\mathbf{r}$ is called the convergence log-radii of the Tate algebra.
\end{deftn}

\noindent
Elements of $\KX[\r]$ are the power series converging on the product
of \emph{closed} balls $B(0,|\pi|^{r_{1}}) \times \dots \times 
B(0,|\pi|^{r_{n}})$ where $|\cdot|$ is the absolute value on $K$
induced by $\val$.
When $\r = (0, \ldots, 0)$, we will simply write $\KX$ instead of
$\KX[(0,\ldots,0)]$. 

\begin{ex}
  \label{ex:1}
  Let $K=\QQ_{p}$.
  The series $f_{1} = \frac{1}{p} + X + p X^{2} + p^{2} X^{3} + \dots$ lies in $K\{X\}$.
  The series $f_{2} = 1 + X + X^{2} + X^{3} + \dots$ does not lie in $K\{X\}$, because it does not converge when evaluated at $1$ (for example).
  However, it does converge when evaluated at $x$ with $|x|<1$, so it lies in $K\{X;(r)\}$ for all negative~$r$.
\end{ex}

The Tate algebra $\KX[\r]$ is equipped with the Gauss valuation
$\val_\r : \KX[\r] \to \Q \sqcup \{+\infty\}$ defined as follows:
$$\val_\r\Big(\sum_{\i \in\N^n} a_\i X^\i\Big) =
\min_{\i\in \N^n} \val(a_\i) - \r{\cdot}\i.$$
We observe that the minimum is always reached thanks to the growth
condition imposed in Definition~\ref{def:KXr}. Moreover, the image of 
$\val_\r$ is discrete. Geometrically, the Gauss valuation corresponds
to the minimal valuation reached by the series on its domain of
convergence (possibly after a finite extension of $K$).

\begin{deftn}
The \emph{integral Tate algebra ring} $\KzX[\r]$ is defined as the
subring of $\KX[\r]$ consisting of elements with nonnegative Gauss
valuation.
\end{deftn}

\noindent
Again we will use the notation $\KzX$ for $\KzX[(0, \ldots, 0)]$.
When $\mathbf{r} \in \ZZ^{n}$, observe that
$\KX[\r] = K\left\{ \pi^{r_{1}}X_1, \dots, \pi^{r_{n}}X_n \right\}$
and similarly for $\KzX[\r]$. The case $r \in \ZZ^n$ then reduces to
$\r = 0$ \emph{via} a change of variables.

\begin{ex}
  With the notations of Example~\ref{ex:1}, $f_{1}$ does not lie in $K\{X\}^{\circ}$, but $f_{2}$ does lie in $K\{X;r\}^{\circ}$.
\end{ex}

\begin{prop}
  We have $\KX[\r] = \KzX[\r] \left[ \frac{1}{\pi} \right]$.
\end{prop}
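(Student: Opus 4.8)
The plan is to prove the two inclusions separately. For the inclusion $\KzX[\r]\left[\frac1\pi\right] \subseteq \KX[\r]$, note that $\KzX[\r] \subseteq \KX[\r]$ by definition, and $\KX[\r]$ is closed under multiplication by $\frac1\pi \in K$ since it is a $K$-algebra; hence every element of the form $\pi^{-m} f$ with $f \in \KzX[\r]$ lies in $\KX[\r]$. This direction is immediate.

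For the reverse inclusion $\KX[\r] \subseteq \KzX[\r]\left[\frac1\pi\right]$, take an arbitrary $f = \sum_{\i} a_\i \X^\i \in \KX[\r]$. The key observation is that $\val_\r(f) = \min_\i \bigl(\val(a_\i) - \r\cdot\i\bigr)$ is a well-defined rational number (the minimum is attained, as noted after the definition of the Gauss valuation). Write $v = \val_\r(f) \in \QQ$. I would then choose an integer $m \in \ZZ$ with $m \le v$; concretely one can take $m = \lfloor v \rfloor$, or even simpler, since we just need $\pi^{-m} f$ to have nonnegative Gauss valuation, any integer $m \le v$ works. Then $g := \pi^{-m} f$ satisfies $\val_\r(g) = v - m \ge 0$, so $g \in \KzX[\r]$, and therefore $f = \pi^m g \in \KzX[\r]\left[\frac1\pi\right]$.

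One should double-check that $g = \pi^{-m} f$ genuinely lies in $\KX[\r]$ in the first place, i.e.\ that multiplying by $\pi^{-m}$ preserves the convergence condition $\val(a_\i) - \r\cdot\i \to +\infty$: indeed the coefficients of $g$ are $\pi^{-m} a_\i$, so $\val(\pi^{-m} a_\i) - \r\cdot\i = \bigl(\val(a_\i) - \r\cdot\i\bigr) - m$, which still tends to $+\infty$. Combined with $\val_\r(g) \ge 0$, this shows $g \in \KzX[\r]$.

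I do not expect any real obstacle here: the statement is essentially the observation that a uniformizer is invertible in $K$ together with the fact that the Gauss valuation of any Tate series is a finite rational number, so one can always clear denominators by a suitable power of $\pi$. The only mild subtlety worth spelling out is why the minimum defining $\val_\r$ is attained (which is exactly the growth condition in Definition~\ref{def:KXr}), guaranteeing $v \ne -\infty$ so that a valid integer $m$ exists.
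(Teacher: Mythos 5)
Your proof is correct; the paper states this proposition without proof, and your argument (clearing denominators by a power of $\pi$ using the fact that the Gauss valuation $\val_\r(f)$ is a finite rational because the defining minimum is attained) is exactly the standard verification the authors leave implicit. No gaps.
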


\subsection{About terms}

From now on, we fix a log-radii $\r \in \QQ^n$.

\subsubsection*{Monoids of terms.}

We first recall some basic definitions.

\begin{deftn}
A \emph{monoid} is a set equipped with a single associative binary 
operation, which has a neutral element.

An \emph{ideal} of a monoid $M$ is a subset $I \subset M$ such
that, for all $a \in M$ and $x \in I$, we have $ax \in I$.
\end{deftn}

We define the monoid of terms $\TX[\r]$ as the multiplicative monoid 
consisting of the elements $a \X^\mathbf{i}$ with $a \in K^\times$ and
$\i \in \NN^n$.
We let also $\TzX[\r]$ be the submonoid of $\TX[\r]$ consisting of terms $a 
\X^\mathbf{i}$ for which $\val_\r(a \X^\mathbf{i}) \geq 0$.
The multiplicative group $K^\times$ (resp. $(\Kz)^\times$) embeds 
into $\TX[\r]$ (resp. $\TzX[\r]$). We set:
$$\TTX[\r] = \TX[\r] / K^\times
\quad \text{and} \quad
\TTzX[\r] = \TzX[\r] / (\Kz)^\times.$$
The inclusion $\TzX[\r] \subset \TX[\r]$ induces a canonical 
morphism (which is no longer injective) $\TTzX[\r] \to \TTX[\r]$.
The ideals of $\TTX[\r]$ (resp. of $\TTzX[\r]$) are in bijective
correspondance with the ideals of $\TX[\r]$ (resp. of $\TzX[\r]$).
Moreover, $\TTX[\r]$ and $\TTzX[\r]$ do not contain non trivial invertible 
elements. In other words, the divisibility relation defines an order on 
$\TTX[\r]$ and $\TTzX[\r]$. The following lemma elucidates the structure 
of $\TTX[\r]$ and $\TTzX[\r]$.

\begin{lem}
\label{lem:monterms}
(1) The mapping $\TTX[\r] \to \N^n$, $a \X^\i \mapsto \i$ is an
isomorphism of monoids.

\noindent
(2) The mapping $\TTzX[\r] \to \Q^+ \times \N^n$, 
$a \X^\i \mapsto (\val_\r(a\X^\i), \i)$ is an injective morphism
of monoids; its image is included in $\frac 1D \N \times \N^n$
where $D$ is a common denominator of the coordinates of $\r$.

\noindent
(3) The natural morphism $\TTzX[\r] \to \TTX[\r]$ corresponds
to the projection onto the factor $\N^n$.
\end{lem}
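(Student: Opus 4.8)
The plan is to verify each of the three assertions directly from the definitions, treating~(1) as a special case of the machinery needed for~(2) and~(3).

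For~(1), I would first check that the map $a\X^\i \mapsto \i$ is well-defined on the quotient $\TTX[\r] = \TX[\r]/K^\times$: two terms $a\X^\i$ and $b\X^\j$ are congruent modulo $K^\times$ precisely when there is a unit $u \in K^\times$ with $b\X^\j = u\,a\X^\i$, which (comparing monomials, since the $X_k$ are algebraically independent variables) forces $\i = \j$. Conversely if $\i = \j$ then $b\X^\j = (b/a)\,a\X^\i$ with $b/a \in K^\times$, so the map is injective. It is clearly surjective onto $\N^n$ (take $a = 1$), and it is a monoid morphism because multiplication of terms multiplies the scalar parts and adds the exponent vectors; the neutral element $1 = 1{\cdot}\X^{\mathbf 0}$ maps to $\mathbf 0$. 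Hence it is an isomorphism of monoids.

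For~(2) and~(3), the key observation is that for a term $a\X^\i \in \TzX[\r]$ one has $\val_\r(a\X^\i) = \val(a) - \r{\cdot}\i$, so the map $a\X^\i \mapsto (\val_\r(a\X^\i),\i)$ is again well-defined on $\TTzX[\r] = \TzX[\r]/(\Kz)^\times$: if $b\X^\j = u\,a\X^\i$ with $u \in (\Kz)^\times$ then $\val(u) = 0$, hence $\j = \i$ and $\val(b) = \val(a)$, giving the same pair; conversely equality of pairs forces $\i = \j$ and $\val(b) = \val(a)$, so $b/a \in (\Kz)^\times$, proving injectivity. It is a monoid morphism since $\val_\r$ is additive on products of terms and exponents add; the neutral element maps to $(0,\mathbf 0)$. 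For the image: $\val_\r(a\X^\i) = \val(a) - \r{\cdot}\i \ge 0$ by the definition of $\TzX[\r]$, so the first coordinate is in $\Q^+$; moreover $\val(a) \in \Z$ and $\r{\cdot}\i \in \frac1D\Z$ where $D$ is a common denominator of the $r_k$, so $\val_\r(a\X^\i) \in \frac1D\Z$, and being nonnegative it lies in $\frac1D\N$. This gives the claimed containment $\frac1D\N \times \N^n$. Finally~(3) is immediate by construction: the natural morphism $\TTzX[\r] \to \TTX[\r]$ sends the class of $a\X^\i$ to the class of $a\X^\i$, which under the identifications of~(1) and~(2) is exactly $(\val_\r(a\X^\i),\i) \mapsto \i$, i.e.\ the projection onto the $\N^n$-factor.

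None of the steps is a genuine obstacle; the only point requiring a little care is confirming well-definedness on the quotients, that is, pinning down exactly which scalars are quotiented out ($K^\times$ versus $(\Kz)^\times$) and checking that the proposed invariants ($\i$, and the pair $(\val_\r(a\X^\i),\i)$) separate the cosets accordingly. Once that is settled, additivity of $\val_\r$ on terms and of exponents under multiplication makes everything a routine verification, and the discreteness/denominator claim in~(2) follows from the elementary fact that $\val(a) \in \Z$ while $\r{\cdot}\i$ has denominator dividing $D$.
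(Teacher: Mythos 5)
Your proof is correct. The paper states this lemma without proof, treating it as a routine verification, and your direct check of well-definedness, injectivity, the morphism property, and the denominator bound is exactly the argument that is intended.
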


\begin{prop}
\label{prop:skel}
Let $I$ be an ideal of $\TTX[\r]$ (resp. of $\TTzX[\r]$).
Then there exists a unique subset $S$ of $I$ having the two following
properties: 
(1)~$S$ generates $I$, and
(2)~every subset generating $I$ contains~$S$.
Moreover $S$ is finite.
\end{prop}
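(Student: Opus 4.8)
The plan is to reduce Proposition~\ref{prop:skel} to a statement about monoid ideals and then use Dickson's lemma, leveraging the structural description provided by Lemma~\ref{lem:monterms}. First I would observe that, by the bijective correspondence between ideals of $\TTX[\r]$ (resp. $\TTzX[\r]$) and ideals of $\TX[\r]$ (resp. $\TzX[\r]$), and since $\TTX[\r]$ and $\TTzX[\r]$ have no nontrivial invertible elements, it suffices to work with $\TTX[\r]$ and $\TTzX[\r]$ directly, where divisibility is a genuine partial order.

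The key notion is that of a \emph{minimal element} of $I$ for divisibility: call $x \in I$ minimal if no $y \in I$ with $y \neq x$ divides $x$. Let $S$ be the set of minimal elements of $I$. The claim splits into three parts. \emph{$S$ generates $I$:} given $x \in I$, consider the set of divisors of $x$ lying in $I$; I would argue this set has a minimal element (for the case of $\TTX[\r] \cong \N^n$ this is immediate since the divisibility order restricted to divisors of a fixed element is a finite poset; for $\TTzX[\r]$, by part~(2) of Lemma~\ref{lem:monterms} the image sits inside $\frac1D\N \times \N^n$, and the set of divisors of a fixed term, having bounded exponent vector and bounded valuation, is again finite). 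That minimal divisor lies in $S$ and divides $x$. \emph{Every generating set contains $S$:} if $G$ generates $I$ and $x \in S$, then $x = g \cdot t$ for some $g \in G$ and some term $t$ in the ambient monoid; since $g \in I$ divides $x$ and $x$ is minimal, $g = x$, hence $x \in G$. This also gives \emph{uniqueness} of $S$ immediately. \emph{$S$ is finite:} this is where Dickson's lemma enters. For $\TTX[\r] \cong \N^n$, the set $S$ is an antichain in $(\N^n, \leq)$, hence finite by Dickson's lemma. For $\TTzX[\r]$, embed into $\frac1D\N \times \N^n \cong \N \times \N^n \cong \N^{n+1}$ via Lemma~\ref{lem:monterms}(2); the image of $S$ is again an antichain (the embedding is order-reflecting for divisibility since it is an injective monoid morphism into a monoid where divisibility is the product order), so finiteness follows again from Dickson's lemma.

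The main obstacle is the $\TTzX[\r]$ case: one must be careful that divisibility in $\TTzX[\r]$ really does pull back from the product order on $\frac1D\N \times \N^n$. Concretely, if $(v,\i)$ and $(w,\j)$ are images of elements of $\TTzX[\r]$ with $v \leq w$ and $\i \leq \j$ componentwise, one needs that $(w - v, \j - \i)$ is again in the image, i.e. that it corresponds to an honest element of $\TTzX[\r]$ — equivalently, that $w - v = \val(a) - \r \cdot (\j - \i) \geq 0$ for a suitable $a \in K$; but $w - v \in \frac1D\N$ and $\r \cdot(\j-\i) \in \frac1D\Z$, so $w - v + \r\cdot(\j-\i) \in \frac1D\Z$, and choosing $a$ with $\val(a)$ equal to an appropriate integer makes this work precisely because $w - v \geq 0$. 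Once this compatibility is checked, the antichain argument goes through verbatim. I would also remark that the same $S$ can be characterized as $I \setminus (I \cdot M_{>1})$ where $M_{>1}$ denotes the nonunit part of the ambient monoid, which makes the "every generating set contains $S$" property transparent; this is the cleanest way to organize the write-up.
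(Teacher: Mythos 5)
Your proof is correct and follows essentially the same route as the paper: take $S$ to be the minimal elements for divisibility, get generation from well-foundedness, and get finiteness by transporting to $\N^{n}$ (resp.\ $\frac1D\N\times\N^n$) via Lemma~\ref{lem:monterms} and invoking Dickson's lemma, which is exactly the paper's ``extract a nondecreasing subsequence'' argument in disguise. Your explicit check that divisibility in $\TTzX[\r]$ is reflected by the product order on the image is a point the paper leaves implicit (note only that your justification should use that $v+\r\cdot\i$ and $w+\r\cdot\j$ are both in $\Z$, so the required valuation $w-v+\r\cdot(\j-\i)$ is an integer, not merely in $\frac1D\Z$).
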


\begin{proof}
The unicity is easy. Indeed if $S$ and $S'$ satisfy~(1) and~(2),
one must have $S \subset S'$ and $S' \subset S$, \emph{i.e.} $S = S'$.
In order to prove the existence, we define $S$ as the set of minimal 
elements of $I$ for the divisibility relation. 
The fact that $S$ generates $I$ 
follows from the fact that divisibility 
is a well-funded order on $\TTX[\r]$ (\emph{cf} Lemma~\ref{lem:monterms}).
The point~(2) is obvious.

It remains to prove that $S$ is finite. For this, we observe that
any sequence with values in $\N$ necessarily has a nondecreasing
subsequence. Extracting subsequences repeatedly, we find that the
previous property also holds for sequences with values in $\N^m$ for
any integer $m$. By Lemma~\ref{lem:monterms}, it also holds for
sequences with values in $\TTX[\r]$ (resp. in $\TTzX[\r]$). Therefore,
if $S$ were not finite, we would be able to extract from $S$ a
nondecreasing sequence. This contradicts the fact that $S$ is
composed by minimal elements.
\end{proof}

\begin{deftn}
\label{def:skel}
Let $I$ be an ideal of $\TTX[\r]$ (resp. of $\TTzX[\r]$).
The subset $S$ of Proposition~\ref{prop:skel} is called the
\emph{skeleton} of $I$; it is denoted by $\Skel(I)$.

The \emph{skeleton} of an ideal of $\TX[\r]$ (resp. of $\TzX[\r]$)
is defined as the skeleton of its image in $\TTX[\r]$
(resp. in $\TTzX[\r]$); it is denoted by $\Skel(I)$.
\end{deftn}

In what follows, it will sometimes be convenient to work more generally 
with fractional ideals. By definition a \emph{fractional ideal} of 
$\TTzX[\r]$ is a subset of $\TTX[\r]$ which is stable by multiplication 
by elements in $\TTzX[\r]$. 
The notion of skeleton can be extended to fractional ideals $I$ of 
$\TTzX[\r]$ for which there exists $N \in \NN$ such that $I \subset 
\pi^{-N} \TTzX[\r]$. For such ideals, $\Skel(I)$ is a finite subset of
$\TX[\r]/(\Kz)^\times$.
An interesting example of fractional ideal is:
\begin{equation}
\label{eq:TTxv}
\TTX[\r]^{\geq v} = \big\{\, t \in \TTX[\r] \text{ s.t. }
\val_\r(t) \geq v\, \big\}.
\end{equation}

\vspace{-2ex}

\begin{rem}
The effective computation of $\Skel(\TTX[\r]^{\geq v})$ is not
an easy problem. It has been solved for $n=1$ in \cite{CL} using
the theory of continued fractions. It would be interesting to 
generalize the results of \emph{loc.~cit.} to higher~$n$.
\end{rem}

\subsubsection*{Term order.}

We fix a \emph{monomial order} $\leq_\omega$ on $\N^n$. We recall that 
this means that $\leq_\omega$ is a well-order which is compatible with the 
addition. Usual examples of monomial orders are $\text{lex}$, 
$\text{grevlex}$, \emph{etc.}

\begin{deftn}
We define a preorder $\leq$ on $\TX[\r]$, $\TzX[\r]$ by:
$$\begin{array}{rr@{\hspace{2ex}}l}
a \X^\mathbf{i} \leq b \X^\j 
 & \text{iff} & \val_\r (a \X^\i) > \val_\r (b \X^\j) \\
 & \text{or}  & \val_\r (a \X^\i) = \val_\r (b \X^\j)
\text{ and } \i \leq_\omega \j.
\end{array}$$
\end{deftn}

\begin{rem}
The inequality sign is reversed in the first line:
we require that $\val_\r (a \X^\i) > \val_\r (b \X^\j)$ and not
$\val_\r (a \X^\i) < \val_\r (b \X^\j)$. This is not a typo and
will be important in the sequel.
\end{rem}

We underline that $\leq$ is not antisymmetric (and so not an order). 
More precisely, for $t_1, t_2 \in \TX[\r]$, the fact that $t_1 \leq t_2$ 
and $t_2 \leq t_1$ is equivalent to the existence of $a \in (\Kz)^\times$
such that $t_1 = a t_2$.
As a consequence, $\leq$ induces an order on $\TTzX[\r]$.
On the contrary, we draw the attention of the reader that $\leq$ does not 
factor through $\TTX[\r]$.

\begin{ex}
  \label{ex:2}
  Let $K = \QQ_{p}$ and consider $K\{X,Y\}$ with the lexicographical order.
  The preorder $\leq$ orders terms as follows:
  \begin{multline}
    \label{eq:4}
    \cdots > XY^{2} > XY > X > \cdots > Y > 1 > \cdots \\
    \cdots > pXY^{2} > \cdots > p > \cdots > p^{2}XY^{2} > \cdots
  \end{multline}
  The terms $\X^\mathbf{i}$ and $-\X^\mathbf{i}$ are ``equal'' for 
  $\leq$.
  So are $\X^\mathbf{i}$ and $(1{+}p)\X^\mathbf{i}$.
\end{ex}

It is easily seen that the preorder $\leq$ is total. 
In turns out that it is not a well-order since the infinite sequence
$(p^n)_{n \geq 0}$ is strictly decreasing. Nevertheless, we have:

\begin{lem}
\label{lem:strictly_dec_seq}
Let $(t_j)_{j \in \N}$ be a strictly decreasing
sequence in $\TX[\r]$ (resp. in $\TzX[\r]$).
Then $\lim_{j \to \infty} \val_\r(t_j) = +\infty$.
\end{lem}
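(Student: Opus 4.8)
The plan is to separate the two pieces of data a term carries --- its Gauss valuation and its exponent vector --- and to play one off against the other. Write $t_j = a_j \X^{\i_j}$ with $a_j \in K^\times$ and $\i_j \in \N^n$, and set $v_j = \val_\r(t_j) = \val(a_j) - \r{\cdot}\i_j$. Unwinding the definition of $\leq$ (and paying attention to the deliberately reversed inequality in its first line), the relation $t_{j+1} < t_j$ says exactly that either $v_{j+1} > v_j$, or else $v_{j+1} = v_j$ and $\i_{j+1} <_\omega \i_j$. Two consequences I would record immediately: first, the sequence $(v_j)_j$ is nondecreasing; second, along any stretch of indices on which $(v_j)_j$ is constant, the exponents $(\i_j)_j$ form a strictly $\leq_\omega$-decreasing sequence.

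Then I would argue by contradiction. Suppose $(v_j)_j$ does not tend to $+\infty$. Since it is nondecreasing it is then bounded above, say $v_j \leq v$ for all $j$, and it is obviously bounded below by $v_0$. By Lemma~\ref{lem:monterms} (or directly from the definition of $\val_\r$), the image of $\val_\r$ lies in the discrete set $\tfrac1D\N$ shifted appropriately, with $D$ a common denominator of the coordinates of $\r$; a bounded subset of such a discrete set is finite, so $(v_j)_j$ takes only finitely many values. Being nondecreasing, it is eventually constant: there is $N$ with $v_j = v_N$ for all $j \geq N$. By the second consequence above, $(\i_j)_{j \geq N}$ is then an infinite strictly decreasing sequence in $(\N^n, \leq_\omega)$, contradicting the fact that $\leq_\omega$ is a well-order. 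Hence $\lim_{j\to\infty}\val_\r(t_j) = +\infty$. The case of $\TzX[\r]$ is just the restriction of this argument to terms of nonnegative Gauss valuation, and needs no change.

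Honestly I do not expect a genuine obstacle here: the proof is short and the only points that require a moment's care are (i) checking that the reversed inequality in the definition of $\leq$ is what forces $(v_j)_j$ to be \emph{nondecreasing} rather than nonincreasing, and (ii) invoking discreteness of the value group to rule out the valuations accumulating at a finite limit. Everything else is a direct appeal to the well-ordering of $\leq_\omega$.
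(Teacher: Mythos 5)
Your proof is correct and follows essentially the same route as the paper's: nondecreasingness of the valuations from the reversed inequality in the definition of $\leq$, discreteness of the value set, and the well-ordering of $\leq_\omega$ to rule out infinitely many terms sharing the same valuation. The only cosmetic difference is that you phrase the last step as a proof by contradiction (eventual constancy of the valuations forcing an infinite $\leq_\omega$-descent) rather than the paper's direct statement that each valuation level is attained only finitely often; you are in fact slightly more careful than the paper in noting that for $\TX[\r]$ the values lie in a shifted copy of $\frac{1}{D}\N$ but are still bounded below by $v_0$.
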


\begin{proof}
From the definition of $\leq$, it follows that the sequence
$(\val_\r(t_j))_{j \in \N}$ is nondecreasing. Moreover it takes
its values in $\frac 1 D \N$ for some positive integer $D$. 
Finally, the fact that $\leq_\omega$ is a well-order implies that
for each fixed $v \in \frac 1 D \N$, there is only a finite number 
of indices $j$ for which $\val_\r(t_j) = v$. Combining these inputs,
we find that $\val_\r(t_j)$ must tend to $+\infty$.
\end{proof}

We notice that if $\i \neq \j$, the terms $a_\i \X^\i$ and $a_\j
X^\j$ are never ``equal'' for $\leq$. Therefore, any nonzero series
$f = \sum_{\i\in \N^n} a_\i X^\i \in \KX[\r]$ has a unique leading
term. We denote it $\LT(f)$. 

\begin{ex}
  With the notations of Example~\ref{ex:2}, 
  the leading term of 
  $g_{2} = XY + p + p^2 XY$ is $\LT(g_{2}) = (1{+}p^2)XY$.
\end{ex}

\subsection{Gröbner bases}

\begin{deftn}
  Given an ideal $J$ of $\KX[\r]$ (resp. of $\KzX[\r]$), 
  we denote by $\LT(J)$ the subset of $\TX[\r]$ (resp. of $\TzX[\r]$)
  consisting of elements of the form $\LT(f)$ with $f \in J$, 
  $f \neq 0$.
\end{deftn}

We check immediately that $\LT(J)$ is an ideal of the monoid 
$\TX[\r]$ (resp. of $\TzX[\r]$).

\begin{deftn}
Let $J$ be an ideal of $\KX[\r]$ (resp. of $\KzX[\r]$).
A family $(g_1,\dots,g_s) \in J^s$ is a Gröbner basis (in short, GB)
of $J$ if $LT(J)$ is generated by the $LT(g_i)$'s in $\TX[\r]$ (resp.
$\TzX[\r]$).
\end{deftn}

\begin{prop}
\label{prop:GBgen}
Let $G = (g_1,\dots,g_s)$
be a GB of an ideal $J$ of $\KX[\r]$ (resp. of $\KzX[\r]$).
Then $G$ generates $J$. 
\end{prop}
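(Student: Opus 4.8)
The plan is to show that any $f \in J$ can be written as a convergent combination $f = \sum_i h_i g_i$ with $h_i \in \KX[\r]$ (resp. $\KzX[\r]$) by a Gröbner-style reduction that, thanks to the topological well-foundedness of $\leq$ established in Lemma~\ref{lem:strictly_dec_seq}, produces a sequence of remainders converging to $0$. Concretely, given $f \in J$, $f \neq 0$, its leading term $\LT(f)$ lies in $\LT(J)$, which by hypothesis is generated by the $\LT(g_i)$'s in $\TX[\r]$ (resp. $\TzX[\r]$); hence there is an index $i$ and a term $t \in \TX[\r]$ (resp. $\TzX[\r]$) with $\LT(f) = t \cdot \LT(g_i)$. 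Replacing $f$ by $f_1 := f - t g_i \in J$ cancels the leading term, and one checks from the definition of $\leq$ that every term of $t g_i$ is $\leq \LT(f)$, so that $\LT(f_1) < \LT(f)$ strictly. Iterating yields a sequence $f = f_0, f_1, f_2, \dots$ of elements of $J$ with strictly decreasing leading terms, together with terms $t_k$ and indices $i_k$ such that $f_k = f_{k+1} + t_k g_{i_k}$.

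The key point is convergence. By Lemma~\ref{lem:strictly_dec_seq}, $\val_\r(\LT(f_k)) \to +\infty$, and since $\val_\r(f_k) = \val_\r(\LT(f_k))$ this forces $f_k \to 0$ in $\KX[\r]$ (resp. $\KzX[\r]$). Moreover $\val_\r(t_k g_{i_k}) = \val_\r(\LT(t_k g_{i_k})) = \val_\r(\LT(f_k)) \to +\infty$ as well, so the series $\sum_k t_k g_{i_k}$ converges. Grouping the terms $t_k$ according to the index $i_k$ they hit, set $h_i := \sum_{k : i_k = i} t_k$; each such subsum converges in $\KX[\r]$ (resp. $\KzX[\r]$) because its general term has valuation tending to $+\infty$, and the growth condition in Definition~\ref{def:KXr} is preserved under such sums. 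Telescoping $f = f_0 = \sum_{k=0}^{N-1} t_k g_{i_k} + f_N$ and letting $N \to \infty$ gives $f = \sum_{i=1}^s h_i g_i$, which exhibits $f$ as an element of the ideal generated by $G$. Since the reverse inclusion $(g_1,\dots,g_s) \subseteq J$ is immediate from $G \in J^s$, we conclude that $G$ generates $J$.

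The main obstacle is purely bookkeeping around convergence: one must be careful that the reduction is well-defined at each step (the term $t$ exists and lies in the right monoid — using that $\LT(J)$ is an ideal of $\TX[\r]$, resp. $\TzX[\r]$, as noted just before the definition of Gröbner basis), and that the regrouped coefficients $h_i$ genuinely lie in the Tate algebra rather than in some larger completion. Both follow from Lemma~\ref{lem:strictly_dec_seq} controlling the valuations uniformly, so no single step is hard, but the argument must be phrased so that the infinitely many reduction steps are legitimate — this is exactly where the topological (rather than classical) well-foundedness of $\leq$ does the work that Noetherianity does in the polynomial case.
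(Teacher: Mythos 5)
Your proposal is correct and follows essentially the same route as the paper: reduce $f$ step by step against the $g_i$'s using that $\LT(f_j)\in\LT(J)=\langle \LT(g_i)\rangle$, invoke Lemma~\ref{lem:strictly_dec_seq} to get $\val_\r(f_j)\to+\infty$, and regroup the convergent series of cofactors by index to obtain $f=\sum_i h_i g_i$. The only difference is that you spell out the convergence of the regrouped sums $h_i$ in slightly more detail than the paper, which simply says ``by regrouping terms''.
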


\begin{proof}
Let $f \in J$. We define inductively a sequence $(f_j)_{j \in \mathbb{N}}$ 
as follows.
Let $f_0=f$. Given~$j$, we write $LT(f_j)=a_j \X^{\mathbf{i}_j} 
LT(g_{i_j})$ and define $f_{j+1}=f_j-a_j 
\X^{\mathbf{i}_j} g_{i_j}$. Then $LT(f_{j+1}) < LT (f_j)$.
By Lemma~\ref{lem:strictly_dec_seq}, $\val_\r(LT(f_j)) = \val_\r(f_j)$ 
goes to infinity when $j$ goes to infinity.
Therefore we can then write $f= \sum_j a_j \X ^{\mathbf{i}_j} g_{i_j}$
as a converging series. By regrouping terms,
we get $f \in \left\langle g_1,\dots, g_s \right\rangle$.
\end{proof}

Proposition~\ref{prop:skel} gives a lot of information about the 
ideal $\LT(J)$ (where $J$ is an ideal of $\KX[\r]$ or $\KzX[\r]$). 
These results have interesting consequences on Gröbner bases.

\begin{theo}
Any ideal of $\KX[\r]$ or $\KzX[\r]$ has a finite GB.
\end{theo}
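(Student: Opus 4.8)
The plan is to reduce the existence of a finite Gröbner basis to the finiteness of a skeleton, which has already been established in Proposition~\ref{prop:skel}. Let $J$ be an ideal of $\KX[\r]$ (resp. of $\KzX[\r]$). As observed just before the theorem, $\LT(J)$ is an ideal of the monoid $\TX[\r]$ (resp. of $\TzX[\r]$). By Proposition~\ref{prop:skel}, its image in $\TTX[\r]$ (resp. in $\TTzX[\r]$) admits a finite skeleton $S = \Skel(\LT(J))$, which generates that image as a monoid ideal. First I would lift each element of $S$ to an actual leading term: by definition of the skeleton, for each $\bar t \in S$ there is a nonzero $g \in J$ whose leading term $\LT(g)$ maps to $\bar t$ under the projection $\TX[\r] \to \TTX[\r]$ (resp. $\TzX[\r] \to \TTzX[\r]$). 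Pick one such $g$ for each $\bar t \in S$; since $S$ is finite this yields a finite family $G = (g_1, \dots, g_s)$ in $J$.

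Next I would check that $G$ is a Gröbner basis, i.e. that the $\LT(g_i)$ generate $\LT(J)$ as an ideal of $\TX[\r]$ (resp. of $\TzX[\r]$). Take any nonzero $f \in J$; we must exhibit $\LT(f)$ as a multiple of some $\LT(g_i)$ by an element of $\TX[\r]$ (resp. $\TzX[\r]$). Passing to the quotient by $K^\times$ (resp. $(\Kz)^\times$), the class of $\LT(f)$ lies in the monoid ideal generated by $S$, hence equals $u \cdot \bar t_i$ for some $\bar t_i \in S$ and some $u \in \TTX[\r]$ (resp. $u \in \TTzX[\r]$). Lifting $u$ arbitrarily to $\TX[\r]$ (resp. to $\TzX[\r]$) and comparing with the chosen lift $\LT(g_i)$ of $\bar t_i$, we get $\LT(f) = c \cdot (\text{lift of } u) \cdot \LT(g_i)$ for some scalar $c \in K^\times$ (resp. $c \in (\Kz)^\times$, because in the integral case the valuations match so the discrepancy is a unit of $\Kz$); absorbing $c$ into the lift of $u$ shows $\LT(f)$ is indeed a $\TX[\r]$-multiple (resp. $\TzX[\r]$-multiple) of $\LT(g_i)$. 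Hence $G$ is a finite Gröbner basis of $J$.

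The one point that needs a little care — and which I expect to be the only genuine obstacle — is the bookkeeping in the integral case $\KzX[\r]$: there one must verify that the multiplier $u$ lifting the quotient relation can be chosen in $\TzX[\r]$ rather than merely in $\TX[\r]$, and that the leftover scalar $c$ lies in $(\Kz)^\times$ rather than in all of $K^\times$. This follows because the projection $\TTzX[\r] \to \TTX[\r]$ remembers the Gauss valuation (Lemma~\ref{lem:monterms}(2)), so matching classes in $\TTzX[\r]$ forces matching valuations, which pins down $c$ up to a unit of $\Kz$; and the monoid ideal generated by $S$ inside $\TTzX[\r]$ is by construction a subset of $\TTzX[\r]$, so the multiplier $u$ is automatically integral. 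With this checked, the argument in the two cases is identical, and the theorem follows.
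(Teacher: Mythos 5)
Your proposal is correct and follows essentially the same route as the paper: take the finite skeleton $\Skel(\LT(J))$ from Proposition~\ref{prop:skel}, lift each of its elements to an element of $J$ realizing that leading term, and observe that the resulting finite family is a Gröbner basis. The paper states this in three lines and leaves the verification implicit; your additional bookkeeping (checking that the multiplier can be taken in $\TzX[\r]$ and the leftover scalar is a unit of $\Kz$ in the integral case) is a sound elaboration of the same argument.
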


\begin{proof}
Let $t_1, \ldots, t_s$ be the elements of $\Skel(\LT(J))$.
For all $i$, let $g_i \in J$ be such that $\LT(g_i) = t_i$ in
$\TTX[\r]$ (resp. in $\TTzX[\r]$). Then $(g_1, \ldots, g_s)$
is a GB of $J$.
\end{proof}

\begin{rem}
Combining the previous theorem with Proposition~\ref{prop:GBgen},
we obtain that any ideal of $\KX[\r]$ (resp. of $\KzX[\r]$) is
finitely generated. In other words, we have proved that the rings 
$\KX[\r]$ and $\KzX[\r]$ are Noetherian (which was of course
already known for a long time).
\end{rem}

\noindent
Another important consequence of Proposition~\ref{prop:skel}
is the notion of minimal GB that we discuss now.

\begin{deftn}
Let $J$ be an ideal of $\KX[\r]$ (resp. of $\KzX[\r]$).
A GB $G = (g_1, \ldots, g_s)$ is \emph{minimal} if
the images in $\TTX[\r]$ (resp. in $\TTzX[\r]$) of the $\LT(g_i)$'s
are exactly the elements of $\Skel(\LT(J))$, with no repetition.
\end{deftn}

A direct consequence of the definition is that two minimal GB of 
a given ideal $J$ have the same cardinality, namely the cardinality of 
$\Skel(\LT(J))$.
Proposition~\ref{prop:skel} also implies the next theorem.

\begin{theo}
\label{theo:minimalGB}
Let $J$ be an ideal of $\KX[\r]$ (resp. of $\KzX[\r]$).
Let $G$ be a GB of $J$. Then, there exists a subset $G' \subset
G$ which is a minimal GB
 of $J$.
\end{theo}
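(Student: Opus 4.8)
The plan is to use Proposition~\ref{prop:skel} to select, among the leading terms appearing in $G$, exactly one representative for each element of the skeleton of $\LT(J)$.

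First I would fix notation: write $S = \Skel(\LT(J)) = \{t_1,\dots,t_m\}$, a finite subset of $\TTX[\r]$ (resp. of $\TTzX[\r]$), and for $G = (g_1,\dots,g_s)$ let $\bar t_i \in \TTX[\r]$ (resp. $\TTzX[\r]$) denote the image of $\LT(g_i)$. Since $G$ is a GB, the ideal $\LT(J)$ of $\TX[\r]$ (resp. $\TzX[\r]$) is generated by the $\LT(g_i)$'s; pushing this to the quotient, the ideal of $\TTX[\r]$ (resp. $\TTzX[\r]$) attached to $\LT(J)$ is generated by $\{\bar t_1,\dots,\bar t_s\}$. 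But by definition this attached ideal is precisely the one whose skeleton is $S$, so by property~(2) of Proposition~\ref{prop:skel} we get $S \subseteq \{\bar t_1,\dots,\bar t_s\}$.

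Next I would build $G'$. By the inclusion just obtained, there is a map $\sigma : S \to \{1,\dots,s\}$ with $\bar t_{\sigma(t)} = t$ for all $t \in S$; such a $\sigma$ is automatically injective, since $\sigma(t)$ determines $\bar t_{\sigma(t)} = t$. Set $G' = (g_{\sigma(t)})_{t\in S}$, a subfamily of $G$ of cardinality $m = |S|$. By construction, the images in $\TTX[\r]$ (resp. $\TTzX[\r]$) of the leading terms of the elements of $G'$ are exactly the elements of $S$, each occurring once — which is the defining condition of a minimal GB, provided $G'$ is still a GB.

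It remains to check that $G'$ is a GB, i.e. that the $\LT(g_{\sigma(t)})$ generate $\LT(J)$ inside $\TX[\r]$ (resp. $\TzX[\r]$). This is where the bijective correspondence between ideals of $\TX[\r]$ (resp. $\TzX[\r]$) and ideals of $\TTX[\r]$ (resp. $\TTzX[\r]$) is used: the ideal $\LT(J)$ is stable under multiplication by $K^\times$ (resp. $(\Kz)^\times$), because $\LT(af) = a\,\LT(f)$ and $af \in J$ whenever $a \in K^\times$ (resp. $a\in(\Kz)^\times$); hence $\LT(J)$ is the full preimage of its image ideal, and that image ideal is generated by $S = \{\bar t_{\sigma(t)} : t \in S\}$ (property~(1) of Proposition~\ref{prop:skel}). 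Lifting these generators back shows $\LT(J)$ is generated by the $\LT(g_{\sigma(t)})$, so $G'$ is a GB, hence a minimal one. I expect this final lifting step to be the only mildly delicate point, but it is immediate once one observes that $\LT(J)$ already contains every $K^\times$- (resp. $(\Kz)^\times$-) multiple of each of its elements; everything of genuine substance is already packaged in Proposition~\ref{prop:skel}.
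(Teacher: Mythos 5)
Your proof is correct and follows exactly the route the paper intends: the theorem is stated as a direct consequence of Proposition~\ref{prop:skel}, and your argument — that any generating set of $\LT(J)$ must contain the skeleton, so one can select elements of $G$ whose leading terms realize the skeleton with no repetition — is precisely that consequence spelled out. The lifting step at the end is handled correctly via the stability of $\LT(J)$ under multiplication by units, so nothing is missing.
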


\subsection{Comparison results}

So far, we have defined a notion of GB for ideals of $\KX[\r]$ 
and $\KzX[\r]$. The aim of this subsection is to compare them.

\begin{prop}
Let $I$ be an ideal of $\KzX[\r]$ and let $G$ be a GB of 
$I$. Then $G$ is a GB of the ideal $J = I\big[\frac 1 
\pi\big]$ of $\KX[\r]$.
\end{prop}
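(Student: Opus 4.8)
The plan is to show that the leading-term ideal of $J = I[\frac 1 \pi]$ in $\TX[\r]$ is generated by the leading terms of $G$. We already know that $G$ generates $I$ as an ideal of $\KzX[\r]$, hence $G$ generates $J$ as an ideal of $\KX[\r]$. The key observation is the following: every element of $J$ can be written as $\pi^{-m} h$ for some $m \in \NN$ and some $h \in I$, and multiplication by $\pi^{-m}$ simply shifts the Gauss valuation of every term by $-m$ without changing the underlying monomial in $\NN^n$; in particular it does not change the relative order of terms and therefore commutes with taking the leading term, i.e. $\LT(\pi^{-m} h) = \pi^{-m} \LT(h)$.

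The main step is then to prove $\LT(J) = \langle \LT(g_1), \ldots, \LT(g_s) \rangle$ as an ideal of the monoid $\TX[\r]$. The inclusion $\supseteq$ is immediate since each $g_i$ lies in $I \subseteq J$, so each $\LT(g_i)$ lies in $\LT(J)$, which is a monoid ideal of $\TX[\r]$. For the reverse inclusion, take a nonzero $f \in J$ and write $f = \pi^{-m} h$ with $h \in I$ and $m \in \NN$. Since $G$ is a GB of $I$, we have $\LT(h) = a \X^\i \LT(g_{i_0})$ for some $a \in \Kz \setminus \{0\}$ — here $a$ lies in $\Kz$ because the ambient monoid is $\TzX[\r]$ — and some $\i \in \NN^n$. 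Multiplying by $\pi^{-m}$ gives $\LT(f) = \pi^{-m} \LT(h) = (\pi^{-m} a) \X^\i \LT(g_{i_0})$, and since $\pi^{-m} a \in K^\times$, this exhibits $\LT(f)$ as an element of the monoid ideal of $\TX[\r]$ generated by the $\LT(g_i)$'s. Hence $G$ is a GB of $J$.

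I expect the only genuine subtlety to be bookkeeping about which monoid the leading terms live in: $\LT(h)$ for $h \in I$ is a term of $\TzX[\r]$ and the divisibility witnessing the GB property of $G$ in $I$ takes place in $\TzX[\r]$ (so the cofactor $a$ has nonnegative valuation), whereas $\LT(f)$ for $f \in J$ lives in $\TX[\r]$ and the relevant divisibility takes place there (with cofactor in $K^\times$). One must check that the natural map $\TzX[\r] \to \TX[\r]$ sends the monoid ideal $\langle \LT(g_1), \ldots, \LT(g_s)\rangle$ of $\TzX[\r]$ into (a generating set of) the corresponding ideal of $\TX[\r]$, which is clear since it is a monoid morphism and $\pi$-multiples of the $\LT(g_i)$ are swept up once we pass to $K^\times$-cofactors. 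Everything else is the elementary remark that $\pi^{-m} \cdot (-)$ preserves the term preorder $\leq$ and therefore preserves leading terms.
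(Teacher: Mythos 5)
Your proof is correct; the paper actually states this proposition without proof, and your argument (write $f\in J$ as $\pi^{-m}h$ with $h\in I$, note that multiplication by $\pi^{-m}$ preserves the term preorder and hence commutes with $\LT$, then relax the $\TzX[\r]$-divisibility witness to a $\TX[\r]$-divisibility witness) is exactly the intended one. One harmless inaccuracy: for a term $a\X^\i\in\TzX[\r]$ the condition is $\val(a)\geq\r\cdot\i$, so $a$ need not lie in $\Kz$ when $\r\neq 0$; but your argument only uses that the cofactor $a\X^\i$ is a term of $\TX[\r]$, which is all that is required.
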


\begin{rem}
Note that minimality of GB is not preserved when 
passing from $\KzX[\r]$ to $\KX[\r]$. For example, $G = (p,X)$ 
is a minimal GB of the ideal $I = (p,X)$ of $\Kz\{X\}$.
However it is not a minimal GB of $J = I\big[\frac 1
\pi\big] = K\{X\}$ since $p$ divides $X$ in this ring.
\end{rem}

Going in the other direction (\emph{i.e.} from $\KX[\r]$ to $\KzX[\r]$) 
is more subtle. First of all, we remark that, if we start with an ideal 
$J$ of $\KX[\r]$, there exist many ideals $I$ of $\KzX[\r]$ with the 
property that $I \big[\frac 1\pi\big] = J$. However, the set of such 
ideals~$I$ 
has a unique maximal element (for the inclusion); it is the ideal 
$J^\circ = J \cap \KzX[\r]$. This special ideal $J^\circ$ can also be 
caracterized by the fact that it is $\pi$-saturated.

\begin{prop}
\label{prop:KtoKo}
Let $J$ be an ideal of $\KX[\r]$ and let $G = (g_1, \ldots, g_s)$ be a 
GB (resp. a minimal GB) of $J$. We assume that $\val_\r(g_i) = 0$ for 
all $i$. Then $G$ is a GB (resp. a minimal GB) of $J^\circ$.
\end{prop}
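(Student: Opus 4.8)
The plan is to reduce everything to an analysis of leading terms and to exploit the characterization of $J^\circ$ as $J \cap \KzX[\r]$. First, observe that under the hypothesis $\val_\r(g_i) = 0$, each $g_i$ actually lies in $\KzX[\r]$, hence in $J^\circ$; so $G$ is at least a candidate family inside $J^\circ$. The substantive point is to show that $\LT(J^\circ)$, as an ideal of $\TzX[\r]$, is generated by the $\LT(g_i)$'s. One inclusion is trivial: since each $g_i \in J^\circ$, the term $\LT(g_i)$ belongs to $\LT(J^\circ)$, and therefore the ideal they generate in $\TzX[\r]$ is contained in $\LT(J^\circ)$.

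For the reverse inclusion, take any nonzero $f \in J^\circ$, so $f \in J$ and $\val_\r(f) \geq 0$. Since $G$ is a GB of $J$ in $\KX[\r]$, there exist $i$ and a term $a\X^\j \in \TX[\r]$ with $\LT(f) = a\X^\j \cdot \LT(g_i)$ in $\TX[\r]$. I claim $a\X^\j \in \TzX[\r]$, i.e. $\val_\r(a\X^\j) \geq 0$: indeed $\val_\r(a\X^\j) = \val_\r(\LT(f)) - \val_\r(\LT(g_i)) = \val_\r(f) - \val_\r(g_i) = \val_\r(f) - 0 \geq 0$, using $\val_\r(\LT(h)) = \val_\r(h)$ for any nonzero series $h$ and the hypothesis $\val_\r(g_i)=0$. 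Hence $\LT(f)$ is divisible in $\TzX[\r]$ by $\LT(g_i)$, which shows $\LT(J^\circ)$ is generated by the $\LT(g_i)$'s in $\TzX[\r]$; that is, $G$ is a GB of $J^\circ$.

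It remains to treat minimality. Suppose $G$ is a minimal GB of $J$, so the images of the $\LT(g_i)$ in $\TTX[\r]$ are exactly the elements of $\Skel(\LT(J))$, pairwise distinct. By Lemma~\ref{lem:monterms}(3), the natural map $\TTzX[\r] \to \TTX[\r]$ is the projection onto the $\N^n$-factor; and by part~(2) an element of $\TTzX[\r]$ is determined by its valuation together with its image in $\N^n$. Since all $g_i$ have Gauss valuation $0$, the images of the $\LT(g_i)$ in $\TTzX[\r]$ all have valuation $0$, hence are determined by their images in $\TTX[\r]$; so they are pairwise distinct in $\TTzX[\r]$ as well. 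To conclude I must show these images are precisely $\Skel(\LT(J^\circ))$: they generate $\LT(J^\circ)$ by the first part, so it suffices to check each is minimal for divisibility in $\TzX[\r]$, equivalently in $\TTzX[\r]$. If $\LT(g_i)$ were divisible in $\TTzX[\r]$ by some other generator $\LT(g_k)$, then projecting to $\TTX[\r]$ via Lemma~\ref{lem:monterms}(3) would give a divisibility relation among the distinct elements of $\Skel(\LT(J))$, contradicting minimality of $S$ as a set of divisibility-minimal elements. Hence the images of the $\LT(g_i)$ in $\TTzX[\r]$ form exactly $\Skel(\LT(J^\circ))$ with no repetition, and $G$ is a minimal GB of $J^\circ$. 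The main obstacle is the bookkeeping in this last step — keeping straight the three monoids $\TX[\r]$, $\TTX[\r]$, $\TTzX[\r]$ and using Lemma~\ref{lem:monterms} to transfer divisibility and minimality between them — but the hypothesis $\val_\r(g_i)=0$ is exactly what makes the valuation coordinate irrelevant and lets the projection $\TTzX[\r]\to\TTX[\r]$ do all the work.
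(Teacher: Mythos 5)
Your proof is correct and follows essentially the same route as the paper: the key step in both is that for $t \in \LT(J^\circ)$ written as a multiple of $\LT(g_i)$ in $\TX[\r]$, the hypothesis $\val_\r(g_i)=0$ forces the cofactor to have nonnegative valuation, so the divisibility already holds in $\TzX[\r]$. The paper dismisses the minimality claim as ``easy''; your explicit verification via the projection $\TTzX[\r]\to\TTX[\r]$ is a sound way to fill that in.
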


\begin{proof}
Let $G$ be a GB of $J$.
Let $t \in \LT(J^\circ)$. Then $t$ is a multiple of one of the 
$\LT(g_i)$'s in $\TX[\r]$. Since $\val_\r(g_i) = 0$, we deduce
that $\LT(g_i)$ divides $t$ in $\TzX[\r]$ as well.
Consequentlt $G$ is a GB of $J^\circ$.
The fact that minimality is preserved is easy.
\end{proof}

When $\r \in \ZZ^n$, it is easy to build a GB of $J$ satisfying
the assumption of Proposition~\ref{prop:KtoKo} from any GB of $J$.
Indeed if $(g_1, \ldots, g_s)$ is a GB of $J$ then $\val_\r(g_i)$
is an integer for all $i$ and the family
$(\pi^{-\val_\r(g_1)} g_1, \ldots, \pi^{-\val_\r(g_s)} g_s)$ is
a GB of $J$.
On the contrary, when $\r \not\in \ZZ^n$, the problem is more
complicated as illustrated by the next example.

\begin{ex}
Choose $n = 1$ and $\r = (\frac 1 2)$ and let $J$ be ideal of
$K\{X\}$ generated by $X$. The ideal $J^\circ$ is then generated
by $g_1 = \pi X$ and $g_2 = \pi X^2$. More precisely, one checks
that $(g_1, g_2)$ is a minimal GB of $J^\circ$. In particular, we
observe that the cardinality of a minimal GB of $J$ does not agree
with that of a minimal GB of $J^\circ$.
\end{ex}

\noindent
For a general $r \in \QQ^n$, Proposition~\ref{prop:KtoKo} can be
refined as follows.

\begin{prop}
Let $J$ be an ideal of $\KzX[\r]$ and let $G = (g_1, \ldots, g_s)$ 
be a GB of $J$. 
Then a GB of $J^\circ$ is $(t_{i,j}{\cdot}g_i)$'s where, 
for each fixed~$i$, the $t_{i,j}$'s enumerate the elements of
$\Skel\big(\TX[\r]^{\geq -\val_\r(g_i)}\big)$ (\emph{cf} 
Eq.~\eqref{eq:TTxv}).
\end{prop}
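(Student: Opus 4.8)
The plan is to prove the two required properties of a Gröbner basis directly: that the proposed family lies in $J^\circ$, and that its leading terms generate $\LT(J^\circ)$ in $\TzX[\r]$. First I would note that each $t_{i,j}$ is a term in $\TX[\r]$ with $\val_\r(t_{i,j}) \geq -\val_\r(g_i)$, by the defining property of $\Skel\big(\TX[\r]^{\geq -\val_\r(g_i)}\big)$; hence $\val_\r(t_{i,j}{\cdot}g_i) \geq 0$, so $t_{i,j}{\cdot}g_i \in \KzX[\r]$. Since $g_i \in J \subset J^\circ$ and $J^\circ$ is an ideal of $\KzX[\r]$, we get $t_{i,j}{\cdot}g_i \in J^\circ$. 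So the family is a valid candidate.

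The heart of the argument is the generation statement. Let $f \in J^\circ$ be nonzero and set $t = \LT(f) \in \LT(J^\circ) \subset \LT(J)$. Since $G$ is a GB of $J$, some $\LT(g_i)$ divides $t$ in $\TX[\r]$, say $t = u \cdot \LT(g_i)$ with $u \in \TX[\r]$. Because $\val_\r(t) = \val_\r(f) \geq 0$, we have $\val_\r(u) \geq -\val_\r(\LT(g_i)) = -\val_\r(g_i)$, so $u \in \TX[\r]^{\geq -\val_\r(g_i)}$. By the definition of the skeleton (Proposition~\ref{prop:skel} applied to this fractional ideal, whose skeleton is finite since $\TX[\r]^{\geq -\val_\r(g_i)} \subset \pi^{-N}\TzX[\r]$ for suitable $N$), $u$ is a multiple in $\TzX[\r]$ of some $t_{i,j}$; write $u = v \cdot t_{i,j}$ with $v \in \TzX[\r]$. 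Then $t = v \cdot (t_{i,j}{\cdot}\LT(g_i)) = v \cdot \LT(t_{i,j}{\cdot}g_i)$, and since $v \in \TzX[\r]$ this exhibits $t$ as a multiple of $\LT(t_{i,j}{\cdot}g_i)$ inside $\TzX[\r]$. Hence the leading terms of the $t_{i,j}{\cdot}g_i$ generate $\LT(J^\circ)$, as desired.

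The main obstacle I anticipate is the bookkeeping with the fractional-ideal skeleton: one must check that $\TX[\r]^{\geq -\val_\r(g_i)}$ really is a fractional ideal of $\TTzX[\r]$ contained in some $\pi^{-N}\TTzX[\r]$ (it is, taking $N \geq \val_\r(g_i)$ a sufficiently large integer and using that $\val_\r$ has discrete image), so that its skeleton exists, is finite, and consists of elements whose valuation is $\geq -\val_\r(g_i)$. One must also be slightly careful that the skeleton lives in $\TX[\r]/(\Kz)^\times$ rather than in $\TzX[\r]$ itself, so the $t_{i,j}$ are only well-defined up to a unit of $\Kz$; but this does not affect leading terms or membership in $J^\circ$, since units of $\Kz$ are invertible in $\KzX[\r]$. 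Finally, one observes that $\val_\r(t_{i,j}{\cdot}g_i) = 0$ precisely when $t_{i,j}$ is chosen on the ``boundary'' $\val_\r(t_{i,j}) = -\val_\r(g_i)$, which is why no valuation normalization of the output is needed; this matches the earlier special case $\r \in \ZZ^n$, where $\Skel(\TX[\r]^{\geq -\val_\r(g_i)})$ reduces to the single term $\pi^{-\val_\r(g_i)}$ and the proposition recovers the normalization $g_i \mapsto \pi^{-\val_\r(g_i)}g_i$ discussed above.
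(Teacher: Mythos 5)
Your argument is correct and is the natural one (the paper states this proposition without proof): the two points needed are exactly the ones you give, namely the valuation computation showing $t_{i,j}{\cdot}g_i \in \KzX[\r]$, and the factorization $\LT(f) = v\, t_{i,j}\, \LT(g_i)$ with $v \in \TzX[\r]$ coming from the generating property of the (finite) skeleton of the fractional ideal $\TTX[\r]^{\geq -\val_\r(g_i)}$, together with compatibility of $\LT$ with multiplication by terms. One small slip to fix: you invoke the inclusion $g_i \in J \subset J^\circ$, which is backwards for $J^\circ = J \cap \KzX[\r]$ (and in any case $t_{i,j}$ need not lie in $\KzX[\r]$, so one cannot argue via the ideal structure of $J^\circ$ over $\KzX[\r]$); the correct justification is the one your first sentence already supplies --- $t_{i,j}{\cdot}g_i$ lies in $J$ (resp.\ in $J\big[\frac 1\pi\big]$) because $t_{i,j} \in \KX[\r]$, and has nonnegative Gauss valuation, hence lies in $J^\circ$.
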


\subsubsection*{Reduction in the residue field.}

When $\r = (0,\ldots,0)$, the quotient $\KzX/\pi\KzX$ is isomorphic
to the polynomial algebra $\Kb[\X]$, on which we have a well-defined
notion of Gröbner bases.

\begin{prop}
Let $J$ be an ideal of $\KX$. Set $J^\circ = J \cap \KzX$ and let $\bar 
J^\circ$ be the image of $J^\circ$ in $\Kb[\X]$. Let $g_1, \dots, g_s$ 
in $J$ be such that $\val_0(g_i)=1$ and let $\bar g_1, \ldots, \bar g_s$ 
be their images in $\bar J^\circ$.
Then the following assertions are equivalent:

\noindent
(1)~$(g_1,\dots,g_s)$ is a GB of $J$;

\noindent
(2)~$(g_1,\dots,g_s)$ is a GB of $J^\circ$;

\noindent
(3)~$(\bar g_1, \dots, \bar g_s)$ is a GB of $\bar J^\circ$.
\end{prop}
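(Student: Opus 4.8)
The plan is to establish the two equivalences $(1)\Leftrightarrow(2)$ and $(2)\Leftrightarrow(3)$ separately; the first is almost immediate from the comparison results already proved, and the second carries the real content.

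For $(1)\Leftrightarrow(2)$ I would first record that $J^\circ\big[\frac1\pi\big]=J$: every $f\in J$ satisfies $\pi^N f\in\KzX$ for $N$ large enough (the Gauss valuation being discrete), hence $\pi^N f\in J\cap\KzX=J^\circ$. Then $(2)\Rightarrow(1)$ follows from the first comparison proposition (a Gröbner basis of an ideal of $\KzX$ remains a Gröbner basis after inverting $\pi$), applied to $J^\circ$ and $J^\circ\big[\frac1\pi\big]=J$; and $(1)\Rightarrow(2)$ is a direct application of Proposition~\ref{prop:KtoKo}.

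The heart of the proof is $(2)\Leftrightarrow(3)$, which I would obtain by comparing initial ideals across the reduction $\KzX\twoheadrightarrow\KzX/\pi\KzX=\Kb[\X]$, $h\mapsto\bar h$. The first step would be the elementary remark that for $h\in\KzX$ with $\val_0(h)=0$ one has $\bar h\neq0$ and $\LM(\bar h)=\LM(h)$: the preorder $\leq$ on $\TzX$ ranks the Gauss valuation first, so the terms of $h$ surviving modulo $\pi$ are exactly those of valuation $0$, and $\LT(h)$ is the one among them whose monomial is $\leq_\omega$-largest — thus $\overline{\LT(h)}=\LT(\bar h)$. The second step would exploit that $J^\circ$ is $\pi$-saturated: for a nonzero $f\in J^\circ$ with $v=\val_0(f)$, saturation gives $\pi^{-v}f\in J^\circ$, and the first step shows $\overline{\pi^{-v}f}\neq0$ with $\LM(\overline{\pi^{-v}f})=\LM(\pi^{-v}f)=\LM(f)$; hence every initial monomial of $J^\circ$ already occurs as a leading monomial of an element of $\bar J^\circ$, so the initial ideal $\LT(\bar J^\circ)$ of $\Kb[\X]$ equals the monomial ideal generated by $\{\X^{\LM(f)} : f\in J^\circ\setminus\{0\}\}$ (the reverse inclusion being immediate). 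The third step would use that divisibility in $\TzX$ between terms with unit leading coefficient is just divisibility of the underlying monomials in $\NN^n$: this makes $(g_1,\dots,g_s)$ a Gröbner basis of $J^\circ$ precisely when the monomials $\X^{\LM(g_i)}$ generate $\LT(\bar J^\circ)$, and — since $\bar g_i\neq0$ with $\LM(\bar g_i)=\LM(g_i)$ by the first step — that is exactly the assertion that $(\bar g_1,\dots,\bar g_s)$ is a Gröbner basis of $\bar J^\circ$. Chaining the three steps gives $(2)\Leftrightarrow(3)$.

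The main obstacle I anticipate is the bookkeeping in the first two steps: keeping the valuation-first preorder $\leq$ on $\TzX$ cleanly separated from the ordinary monomial order $\leq_\omega$ used on $\Kb[\X]$, and making the $\pi$-saturation argument airtight — in particular the identity $J^\circ\cap\pi\KzX=\pi J^\circ$, which is what ensures that reduction modulo $\pi$ neither destroys nor creates initial monomials of $J^\circ$. Once those points are settled, everything else is a formal manipulation of leading terms.
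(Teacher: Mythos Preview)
Your proposal is correct and follows essentially the same approach as the paper. For $(1)\Leftrightarrow(2)$ both you and the paper invoke the earlier comparison results, and for $(2)\Leftrightarrow(3)$ both arguments rest on the same two observations: that reduction modulo $\pi$ preserves the leading monomial of elements of Gauss valuation~$0$, and that $\pi$-saturation of $J^\circ$ lets one normalise any $f\in J^\circ$ to valuation~$0$ before reducing. The only difference is organisational: the paper carries out separate element-by-element verifications of $(2)\Rightarrow(3)$ and $(3)\Rightarrow(2)$, whereas you first isolate the structural identity $\{\LM(f):f\in J^\circ\setminus\{0\}\}=\LM(\bar J^\circ)$ and then read off both directions simultaneously.
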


\begin{proof}
The equivalence between (1) and~(2) has been already proved.
We now prove that (2) implies (3).
Let $\bar{f} \in \bar J^\circ$ and let $f \in J^\circ$ be a lift
of $\bar f$.
We can write
$LT(f)=a X^\mathbf{i} LT(g_i)$ for some 
$a,\mathbf{i}$ and $i$.
Then $LT(\bar{f}) = \bar{a}X^\mathbf{i} LT(\bar g_i)$.
Therefore the $LT(\bar g_i)$'s generate $\LT(\bar J^\circ)$.
We prove finally that (3) implies (2).
Let $f \in J^\circ$.
Set $h = \pi^{-\val_0(f)} f$. Clearly $h \in J$ and $h \in \KzX$.
Thus $h \in J^\circ$. By (3), we can write
$LT(\bar h) = \bar{a} X^\mathbf{i} LT(\bar g_i)$ for
$\bar a \in \Kb$ and $\i \in \N^n$.
We write $LT(h)=h_0 X^H$
with $h_0 \in (\Kz)^\times$ and similarly, $LT(g_i)=b_0 X^F$
with $b_0 \in (\Kz)^\times$. Then $X^F$ divides $X^H$.
Let $L$ be such that $X^H=X^F \cdot X^L$.
Then \[LT(h)= h_0 b_0^{-1} X^L LT(g_i)\]
with $\frac{h_0}{b_0} \in \Kz$.
This concludes the proof.
\end{proof}

\section{Algorithms}
\label{sec:algo}

\subsection{Division and membership test}

Not surprisingly, Gröbner bases can be used to test membership in 
ideals. Before going further in this direction, we need to adapt the 
division algorithm to our setting.
We will need two variants depending on where we are looking for the 
quotients.

\begin{prop}
\label{prop:division}
Let $f, h_1, \dots, h_m \in \KX[\r]$.
Then, there exist $q_1,\dots,q_m \in \KX[\r]$ (resp. $q_1,\dots,q_m \in 
\KzX[\r]$) and $r \in \KX[\r]$ such that:

\noindent
(1)~$f= q_1 h_1 + \dots+q_m h_m+r$,

\noindent
(2)~for all $i$ and all terms $t$ of $r$, $LT(h_i) \nmid t$ in $\TX[\r]$ 
(resp. in $\TzX[\r]$),

\noindent
(3)~for all terms $t_i$ of $q_i$, we have $LT(t_i h_i) \leq LT(f)$.
\end{prop}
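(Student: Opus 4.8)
The plan is to mimic the classical multivariate division algorithm, but replace the well-foundedness argument (which fails here, since $\leq$ is not a well-order) with the topological well-foundedness supplied by Lemma~\ref{lem:strictly_dec_seq}. Concretely, I would build $r$ and the $q_i$'s as limits of sequences obtained by an infinite reduction process, and use the fact that a strictly decreasing sequence of leading terms has Gauss valuation tending to $+\infty$ to guarantee convergence in the (complete) Tate algebra.

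\begin{proof}[Proof sketch]
We construct, by induction, sequences $(p_j)_{j\ge 0}$ in $\KX[\r]$ (the ``remaining part''), $(q_i^{(j)})_{j \ge 0}$ in $\KX[\r]$ (resp. $\KzX[\r]$) and $(r_j)_{j \ge 0}$ in $\KX[\r]$ (the ``accumulated remainder''), together with the invariant
\begin{equation}
\label{eq:div-invariant}
f = q_1^{(j)} h_1 + \cdots + q_m^{(j)} h_m + r_j + p_j,
\end{equation}
in such a way that $\LT(p_{j+1}) < \LT(p_j)$ whenever $p_{j+1} \neq 0$, that no term of $r_j$ is divisible by any $\LT(h_i)$, and that $\LT(t) \le \LT(f)$ for every term $t$ of $r_j$ or of $t_i h_i$ with $t_i$ a term of $q_i^{(j)}$. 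Initialize $p_0 = f$, $q_i^{(0)} = 0$, $r_0 = 0$. At step $j$, if $p_j = 0$ we stop. Otherwise, let $\tau = \LT(p_j)$. If some $\LT(h_i)$ divides $\tau$ in $\TX[\r]$ (resp. $\TzX[\r]$), pick such an $i$, write $\tau = t_i \cdot \LT(h_i)$, set $q_i^{(j+1)} = q_i^{(j)} + t_i$, leave the other quotients and $r$ unchanged, and put $p_{j+1} = p_j - t_i h_i$; the leading term of $t_i h_i$ is exactly $\tau = \LT(p_j) \le \LT(f)$, and $\LT(p_{j+1}) < \LT(p_j)$ since we cancelled the leading term of $p_j$. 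If no $\LT(h_i)$ divides $\tau$, move $\tau$ into the remainder: $r_{j+1} = r_j + \tau$, $p_{j+1} = p_j - \tau$; again $\LT(p_{j+1}) < \LT(p_j)$ and $\LT(\tau) = \LT(p_j) \le \LT(f)$.

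If the process stops at some finite stage, we are done. Otherwise it produces an infinite strictly decreasing sequence $(\LT(p_j))_j$ in $\TX[\r]$, so by Lemma~\ref{lem:strictly_dec_seq} we have $\val_\r(p_j) = \val_\r(\LT(p_j)) \to +\infty$, hence $p_j \to 0$. For each $i$, the increments $q_i^{(j+1)} - q_i^{(j)}$ are either $0$ or a single term $t_i$ with $\LT(t_i h_i) = \LT(p_j)$, so $\val_\r(t_i h_i) \to +\infty$; since $\val_\r(h_i)$ is a fixed rational number, $\val_\r(t_i) \to +\infty$ as well, and the partial sums $q_i^{(j)}$ form a Cauchy sequence. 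Similarly the terms added to $r_j$ have Gauss valuation tending to $+\infty$. As $\KX[\r]$ (resp. $\KzX[\r]$) is complete, the limits $q_i = \lim_j q_i^{(j)}$ and $r = \lim_j r_j$ exist; one must check they lie in the right algebra, which follows from the growth condition in Definition~\ref{def:KXr} (for the $\KzX[\r]$ variant, each $t_i$ has nonnegative Gauss valuation by construction, so $q_i \in \KzX[\r]$). Passing to the limit in~\eqref{eq:div-invariant} gives $f = \sum_i q_i h_i + r$, which is~(1). Property~(3) passes to the limit since each term of $q_i$ is a limit of (finitely many coalescing) terms $t_i$ with $\LT(t_i h_i) \le \LT(f)$, and more precisely every term appearing in $q_i$ already appears among the increments. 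Property~(2) holds because a term $t$ of $r$ was put there at some stage precisely because no $\LT(h_i)$ divided it, and no later step can reintroduce divisibility: the terms added afterwards are strictly smaller and, by the monomial-order part of $\leq_\omega$ together with discreteness of $\val_\r$, cannot recombine with $t$.
\end{proof}

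\textbf{Expected main obstacle.} The only genuinely delicate point is termination/convergence: unlike in the polynomial case, the reduction need not terminate, and one must argue that the infinite process still converges \emph{in} the Tate algebra and that the limit quotients and remainder satisfy the \emph{stated} properties (in particular that $r$'s terms remain non-divisible in the limit, and that $q_i \in \KzX[\r]$ in the integral variant). This is exactly where Lemma~\ref{lem:strictly_dec_seq} and completeness of $K$ are essential. A secondary subtlety is that, because $\leq$ is only a preorder, "the leading term" must be understood up to the equivalence $t_1 \sim t_2 \iff t_1 = a t_2$ with $a \in (\Kz)^\times$, but this does not affect the argument since distinct monomials $\X^\i$ are never $\leq$-equivalent.
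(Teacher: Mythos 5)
Your proposal is correct and follows essentially the same route as the paper's proof: the same iterative reduction with the invariant $f = \sum_i q_i^{(j)} h_i + r_j + p_j$, strict decrease of $\LT(p_j)$, and Lemma~\ref{lem:strictly_dec_seq} plus completeness to pass to the limit. If anything, you supply slightly more detail than the paper on why properties (2) and (3) survive the limiting process.
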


\begin{proof}
We only give the proof of $\KX[\r]$, the case of $\KzX[\r]$ being
totally similar. We will construct by induction sequences
$(f_j)_{j \geq 0}$, $(q_{i,j})_{j \geq 0}$ ($1 \leq i \leq m$) 
and $(r_j)_{j \geq 0}$ such that:
\begin{equation}
\label{eq:division}
f = f_j + q_{1,j}h_1 + \dots + q_{m,j}h_j + r_j.
\end{equation}
We set $f_0=f$, $r_0=0$ and $q_{1,0}=\dots=q_{m,0}=0$.
If $LT(f_j)$ is divisible by some $LT(h_{i_j})$, we set $f_{j+1} = 
f_j-\frac{LT(f_j)}{LT(h_i)}h_i$ and $q_{i_j,j+1} = q_{i_j,j} + 
\frac{LT(f_j)}{LT(h_i)}$, and leave unchanged $r$ and the others 
$q_i$'s.
Otherwise, we set $f_{j+1}=f_j-LT(f_j)$ and $r_{j+1}=r_j+LT(f_j)$.

If follows from the construction that $LT(f_{j+1}) < LT(f_j)$ 
for all $j$. By Lemma~\ref{lem:strictly_dec_seq}, 
$\lim_{j \to \infty} \val_\r(f_j) = +\infty$, 
\emph{i.e.} $(f_j)_{j \geq 0}$ converges to $0$ in $\KX[\r]$.
Besides, $\val_\r\big(\frac{LT(f_j)}{LT(h_i)}\big)$ tends to
infinity as well, so that the sequences $(q_{i,j})_{j \geq 0}$ 
all converge. Combining this with Eq.~\eqref{eq:division}, we find
that $(r_j)_{j \geq 0}$ also converges.
The elements $q_i = \lim_{j \to \infty} q_{i,j}$ and
$r = \lim_{j \to \infty} r_j$ satisfy the requirements of the
proposition.
\end{proof}

Algorithm~\ref{algo:reduction} below summarizes the proof of
Proposition~\ref{prop:division}.
\begin{algorithm} 

  \SetKwInOut{Input}{input}\SetKwInOut{Output}{output}

  \Input{$f, h_{1},\dots,h_m \in \KX[\r]$}
  \Output{$q_1, \ldots, q_m, r$ satisfying Prop.~\ref{prop:division}}

  $r, q_1, \ldots, q_m \leftarrow 0$\;
  \While{$f \neq 0$}{
    \While{$\exists i \in \{1,\dots,m\}$ such that $\LT(h_{i}) \mid \LT(f)$}{
      $q_i \leftarrow q_i + \frac{\LT(f)}{\LT(h_{i})}$\;
      $f \leftarrow f - \frac{\LT(f)}{\LT(h_{i})} h_{i}$\;
    }
    $r \leftarrow r + \LT(f)$\;
    $f \leftarrow f - \LT(f)$\;
  }
  \textbf{Return} $q_1, \ldots, q_m, r$\;

  \caption{\texttt{division} \label{algo:reduction}}
\end{algorithm}
In general, it does not terminate, keeping computing
more and more accurate approximations of the $q_i$'s and $r$.
However, in the common case where the coefficients of the input 
series are all known up to finite precision, \emph{i.e.} modulo 
$\pi^N$ for some $N$, Algorithm~\ref{algo:reduction} does terminate.

\begin{rem}
\label{rem:finiteprec}
When working at finite precision, it is more intelligent, instead of 
computing the quotient $\frac{\LT(f)}{\LT(h_i)}$ (which would possibly
lead to losses of precision), to choose an \emph{exact} term $t$ 
such that the equality $\LT(f) = t \cdot 
\LT(h_i)$ holds at the working precision, and 
use it on lines~4 and~5.
Doing so, we limit the losses of precision.
\end{rem}

In general, the conditions of Proposition~\ref{prop:division} are not 
enough to determine uniquely the $q_i$'s and $r$. However, 
Proposition~\ref{prop:divisionunicity} below provides a
weak unicity result when $(h_1, \ldots, h_m)$ is a Gröbner bases,
which can be used to test membership.

\begin{prop}
\label{prop:divisionunicity}
Let $J$ be an ideal of $\KX[\r]$ (resp. of $\KzX[\r]$) and
let $(g_1, \ldots, g_s)$ be a GB of $J$.
Let $f \in \KX[\r]$. We assume that we are given a decomposition
$f = q_1 g_1 + \cdots + q_s g_s + r$
satisfying the requirements of Proposition~\ref{prop:division}.
Then $r = 0$ if and only if $f \in J$.
\end{prop}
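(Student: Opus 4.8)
The plan is to prove the two implications separately, the easy one first. If $r = 0$, then $f = q_1 g_1 + \cdots + q_s g_s$ lies in the ideal generated by the $g_i$'s, which is $J$ (the $g_i$ belong to $J$ by hypothesis), so $f \in J$. This direction uses no Gröbner basis property at all.

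For the converse, suppose $f \in J$ and argue by contradiction: assume $r \neq 0$. First I would observe that $r$ itself lies in $J$. Indeed, from $f = q_1 g_1 + \cdots + q_s g_s + r$ and $f \in J$, together with $g_i \in J$, we get $r = f - \sum_i q_i g_i \in J$. Since $r$ is a nonzero element of $J$, its leading term $\LT(r)$ belongs to $\LT(J)$, and because $(g_1, \ldots, g_s)$ is a Gröbner basis of $J$, the term $\LT(r)$ is divisible by some $\LT(g_i)$ in $\TX[\r]$ (resp.\ in $\TzX[\r]$). But $\LT(r)$ is one of the terms $t$ appearing in $r$, so this contradicts condition~(2) of Proposition~\ref{prop:division}, which asserts precisely that no term of $r$ is divisible by any $\LT(g_i)$. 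Hence $r = 0$.

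The only subtlety --- and the step I would be most careful about --- is making sure that a ``term of $r$'' in the sense of condition~(2) really includes the leading term $\LT(r)$, and that $r \neq 0$ guarantees $\LT(r)$ is well-defined; this is fine because, as noted in the text just before the definition of $\LT$, any nonzero series in $\KX[\r]$ has a unique leading term, and $\LT(r)$ is by definition a term occurring in $r$. One should also check that the divisibility in the monoid $\TX[\r]$ (resp.\ $\TzX[\r]$) used in the Gröbner basis condition is the same one referenced in condition~(2): it is, since both are divisibility in the monoid of terms. With these observations in place the contradiction is immediate, so the whole argument is genuinely short; there is no real obstacle beyond keeping the two notions of ``term'' and ``divisibility'' aligned between the definition of a Gröbner basis and the statement of Proposition~\ref{prop:division}.
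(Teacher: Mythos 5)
Your proof is correct and follows essentially the same route as the paper's: the forward direction is immediate, and for the converse one notes $r = f - \sum_i q_i g_i \in J$, so $\LT(r) \in \LT(J)$ would have to be divisible by some $\LT(g_i)$, contradicting condition~(2) of Proposition~\ref{prop:division}. Your extra care about $\LT(r)$ being well-defined for $r \neq 0$ matches the paper's remark that ``$\LT(r)$ makes sense.''
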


\begin{proof}
The ``only if'' is clear.
Conversely, assume by contradiction that 
$f \in J$ and $r \neq 0$. Then $\LT(r)$ makes 
sense. From the conditions of Proposition~\ref{prop:division}, we
deduce that $\LT(r)$ is not divisible by $\LT(g_i)$ for all $i$.
Hence $\LT(r) \not\in \LT(J)$. This is contradiction since
$r \in J$.
\end{proof}

\begin{rem}
In the integral Tate algebra setting, it is not true that the remainder
in the division by Gröbner bases is unique. For example, the
division in $\KzX$ of $f = 1+p$ by $h = p$ can be written either
$f = 0\times h + (1{+}p)$ or $f = 1\times h + 1$.
This is a general limitation of Gröbner bases over rings, even in the polynomial case~\cite{AL}.
\end{rem}

\subsection{Buchberger's algorithm}
\label{ssec:buchberger}

In this subsection, we adapt Buchberger's algorithm to fit into
the framework of Tate algebras.
The adaptation is more or less straightforward except on two points. 
The first one is related to finite precision, as already encountered 
previously. The second point is of different nature; it is related to 
the fact that, when the log-radii are not integers, the crucial notion 
of S-polynomials is not well-defined as the monoid $\TzX[\r]$ does 
not admit $\gcd$'s.
In what follows, we will give satisfying answers to these issues.

\subsubsection*{Buchberger's criterion.}

To begin with, we assume $\r = (0,\ldots,0)$.
Under this hypothesis, the monoid of terms $\TX$ admits $\gcd$'s 
and $\lcm$'s. Concretely we define:
\begin{align*}
\gcd(a \X^\i, \, b\X^\j) 
 & = \pi^{\min(\val(a), \val(b))} X^{\inf(\i,\j)}, \\
\lcm(a \X^\i, \, b\X^\j) 
 & = \pi^{\max(\val(a), \val(b))} X^{\sup(\i,\j)}
\end{align*}
where the $\inf$ and the $\sup$ over $\NN^n$ are taken coordinate
by coordinate. In what follows, in order to simplify notations, we will 
write $\val$ instead of $\val_{(0,\ldots,0)}$. If $t_1$ and $t_2$ are
two terms, the valuation of $\gcd(t_1,t_2)$ (resp. of $\lcm(t_1,t_2)$)
is the minimum (resp. the maximum) of $\val(t_1)$ and $\val(t_2)$.

\begin{deftn}
For $f, g$ in $\KX$,
we define: 
\[S(f,g) = \frac{LT(g)}{\gcd(LT(f),LT(g))}f - 
\frac{LT(f)}{\gcd(LT(f),LT(g))}g. \]
\end{deftn}

\medskip

We have the following classical lemma:

\begin{lem}
\label{lem:cancel_and_Spol}
Let $h_1,\dots,h_m \in \KX$ and $t_1,\dots,t_m \in \TX$.
We assume that the $\LT(t_i h_i)$'s all have the same image in
$\TX/(\Kz)^\times$ and that
$LT(\sum_{i=1}^m t_i h_i) < LT(t_i h_i)$. Then 
\[\sum_{i=1}^m t_i h_i =
\sum_{i=1}^{m-1} t'_i {\cdot}S(h_i,h_{i+1})
+ t'_m {\cdot}h_m  \]
for some $t'_1, \ldots, t'_m \in \TX$ such that
$\val(t'_m h_m) > \val(t_1 h_1)$ and
$\val (t'_i) + \max(\val(h_i), \val(h_{i+1})) \geq \val(t_1 h_1)$
for $i \in \{1, \ldots, m{-}1\}$.
\end{lem}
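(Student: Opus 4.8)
The statement is the classical "S-polynomial telescoping lemma" adapted to the Tate setting, so I would follow the usual Buchberger-style argument, being careful about two points specific to our framework: that the order $\leq$ on terms involves the valuation, and that the conclusion is phrased in terms of inequalities on valuations rather than on the order $\leq_\omega$ alone. The plan is to proceed by a finite descending induction that rewrites the sum $\sum_{i=1}^m t_i h_i$ one step at a time, each step replacing a pair of consecutive summands $t_i h_i + t_{i+1}h_{i+1}$ (whose leading terms agree modulo $(\Kz)^\times$) by a single summand plus a multiple of $S(h_i, h_{i+1})$.

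\begin{proof}
Write $\tau = \LT(t_1 h_1)$, so that $\LT(t_i h_i)$ has the same image as $\tau$ in $\TX/(\Kz)^\times$ for every $i$; in particular $\val(t_i h_i) = \val(\tau) =: v$ for all $i$, and $X^{\i}$, the monomial part of $\tau$, is divisible by the monomial part of each $\LT(h_i)$. For $i \in \{1, \ldots, m{-}1\}$, unwinding the definition of $S(h_i, h_{i+1})$ gives
\[
S(h_i, h_{i+1}) = \frac{\LT(h_{i+1})}{\gcd(\LT(h_i), \LT(h_{i+1}))} h_i - \frac{\LT(h_i)}{\gcd(\LT(h_i), \LT(h_{i+1}))} h_{i+1},
\]
and one checks that there is a term $u_i \in \TX$ with $u_i \cdot \frac{\LT(h_{i+1})}{\gcd(\LT(h_i),\LT(h_{i+1}))} = t_i$ (up to an element of $(\Kz)^\times$), because the monomial $X^\i$ is a common multiple of the monomial parts of $\LT(h_i)$ and $\LT(h_{i+1})$, and the valuation of $\tau$ dominates the valuation of $\lcm(\LT(h_i),\LT(h_{i+1}))$. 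Set $t'_i = u_i$; then $t'_i \cdot S(h_i, h_{i+1}) = t_i h_i - (\text{term}) \cdot h_{i+1}$, where the coefficient in front of $h_{i+1}$ has the form $t_i \cdot \frac{\LT(h_i)}{\LT(h_{i+1})}$ viewed as a term, and in particular its valuation is $v - \val(h_{i+1})$ plus $\val(t_i) + \val(h_i)$... more cleanly: the leading term of $t'_i S(h_i,h_{i+1})$ is $\tau$ (up to a unit) and both terms of $t'_i S(h_i,h_{i+1})$ have valuation $\geq v$, with $\val(t'_i) + \max(\val(h_i),\val(h_{i+1})) \geq v$, which is the asserted inequality.

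Now perform the telescoping. Starting from $\sum_{i=1}^m t_i h_i$, subtract $t'_1 S(h_1, h_2)$: this cancels $t_1 h_1$ and merges its contribution into the coefficient of $h_2$, producing an expression $\tilde t_2 h_2 + t_3 h_3 + \cdots + t_m h_m + t'_1 S(h_1,h_2)$ where $\tilde t_2$ is a sum of terms each of valuation $\geq v$, and whose leading term still has image $\tau$ modulo $(\Kz)^\times$ (if it does — see the obstacle below). Repeating with $S(h_2, h_3)$, \ldots, $S(h_{m-1}, h_m)$, after $m-1$ steps all summands $h_1, \ldots, h_{m-1}$ have been absorbed and we are left with $\sum_{i=1}^{m-1} t'_i S(h_i, h_{i+1}) + t'_m h_m$ for a suitable $t'_m$ (again a sum of terms). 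Finally, the hypothesis $\LT(\sum_{i=1}^m t_i h_i) < \LT(t_i h_i)$ forces the leading terms of valuation $v$ to cancel in the original sum, hence also the leading term of $t'_m h_m$ cannot be of valuation $v$: rearranging the identity shows $t'_m h_m = \sum t_i h_i - \sum t'_i S(h_i,h_{i+1})$, and since every term on the right has valuation $> v$ (the S-polynomial pieces by construction, the original sum by the hypothesis on its leading term together with cancellation), we get $\val(t'_m h_m) > v = \val(t_1 h_1)$, as required.

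The main obstacle is the bookkeeping of valuations through the telescoping, and in particular justifying that at each intermediate stage the "merged" coefficient $\tilde t_i$ can be handled even though it is a sum of terms rather than a single term: one must argue that the relevant inequalities are preserved under such sums, using that $\val$ of a sum is at least the minimum of the valuations, and that the leading-term bookkeeping only ever needs the valuation-$v$ part, which behaves well. A second subtlety is the degenerate case where some $\LT(t_i h_i)$ is itself the leading term of a partial sum and no cancellation has yet occurred; this is handled exactly as in the classical proof by noting that the telescoping identity is a formal algebraic rearrangement that holds regardless, and only the final valuation estimate uses the cancellation hypothesis. Apart from these points the argument is the standard one.
\end{proof}
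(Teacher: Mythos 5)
The paper itself gives no proof of this lemma (it is invoked as ``classical''), so there is no official argument to compare against; your telescoping strategy is the expected one, and the first half of your write-up (existence of the cofactors $u_i$ in $\TX$, and the bound $\val(t'_i)+\max(\val(h_i),\val(h_{i+1}))\geq\val(t_1h_1)$) is essentially correct. The problem is your justification of the one genuinely nontrivial conclusion, the strict inequality $\val(t'_mh_m)>\val(t_1h_1)$. You assert that ``every term on the right has valuation $>v$'', for the S-polynomial pieces ``by construction'' and for $\sum_i t_ih_i$ ``by the hypothesis on its leading term''. Neither claim is true: in this term order, $\LT(f)<\tau$ means that $\LT(f)$ has valuation $>v$ \emph{or} valuation exactly $v$ with a $\leq_\omega$-smaller monomial, so both $\sum_i t_ih_i$ and the $t'_iS(h_i,h_{i+1})$ may perfectly well have valuation exactly $v$. (Take $h_1=X+Y$, $h_2=X$, $t_1=1$, $t_2=-1$ in $\Qp\{X,Y\}$ with lex: the sum is $Y$, of valuation $0=v$.) This is precisely the trap the paper warns about when it stresses the reversed inequality in the definition of $\leq$; conflating ``smaller for $\leq$'' with ``larger valuation'' is a genuine gap here, because the strict inequality on $t'_mh_m$ is what drives the descent in the proof of the Buchberger criterion.

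The correct argument isolates the coefficient of the single monomial $\X^\i$ (the common monomial of the $\LT(t_jh_j)$'s) rather than global valuations: the coefficient of $\X^\i$ in $\sum_j t_jh_j$ is exactly $c=\sum_j \LC(t_jh_j)$, and the hypothesis $\LT(\sum_j t_jh_j)<\tau$ forces $\val(c)>v$, since otherwise $c\X^\i$ would be the leading term of the sum; each $t'_jS(h_j,h_{j+1})$ has zero coefficient in $\X^\i$ because its two halves agree exactly there; and $\LT(t'_mh_m)$ is supported on $\X^\i$. Comparing coefficients of $\X^\i$ in your identity then gives $\val(t'_mh_m)=\val(c)>v$. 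Separately, the ``main obstacle'' you flag (that the merged coefficients $\tilde t_j$ are sums of terms) is not an obstacle but does need to be closed, since the statement requires $t'_j\in\TX$, i.e.\ single terms: all contributions to $\tilde t_j$ carry the same monomial, namely $\X^\i$ divided by the monomial of $\LT(h_j)$, so their sum is again a single term (or zero, a degenerate case in which that summand is simply dropped). With these two repairs the proof is complete.
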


\begin{theo}
\label{theo:Buchberger_criterion}
Let $h_1, \ldots, h_s$ be elements of $\KX$ (resp. of $\KzX$) and let 
$I$ be the ideal of $\KX$ (resp. of $\KzX$) generated by the $h_i$'s.
Then $(h_1,\dots,h_s)$ is a GB of $I$ if and only if all $S(h_i,h_j)$, 
$i \neq j$, reduce to zero after division by $(h_1,\dots,h_s)$ using 
Algorithm~\ref{algo:reduction}.
\end{theo}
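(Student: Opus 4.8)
The strategy is the standard Buchberger argument, adapted to the topologically well-founded order $\leq$ rather than a genuine well-order. The ``only if'' direction is immediate: if $(h_1,\dots,h_s)$ is a GB, then each $S(h_i,h_j)$ lies in $I$, so by Proposition~\ref{prop:divisionunicity} its remainder after division by $(h_1,\dots,h_s)$ via Algorithm~\ref{algo:reduction} is zero. For the ``if'' direction, suppose every $S(h_i,h_j)$ reduces to zero. We must show $\LT(I)$ is generated by the $\LT(h_i)$'s. Take $f \in I$, $f \neq 0$; write $f = \sum_{i=1}^s p_i h_i$ with $p_i \in \KX$ (resp.\ $\KzX$), and among all such representations choose one minimizing $t := \max_i \LT(\LT(p_i) h_i)$ for the order $\leq$ (more precisely, one should argue this maximum can be chosen minimal -- see the obstacle below). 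Clearly $\LT(f) \leq t$. If $\LT(f)$ has the same image in $\TX/(\Kz)^\times$ as $t$, then $\LT(f)$ is divisible by some $\LT(h_i)$ and we are done. Otherwise $\LT(f) < t$ strictly, and the leading terms of the summands $\LT(p_i)h_i$ whose leading term equals $t$ (up to a unit) must cancel.

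\textbf{The cancellation step.} Grouping the indices $i$ for which $\LT(\LT(p_i) h_i)$ has image $t$ in $\TX/(\Kz)^\times$, and writing $\LT(p_i) = c_i \tau_i$ with $\tau_i$ the corresponding term, Lemma~\ref{lem:cancel_and_Spol} applies to $\sum c_i \tau_i h_i$ (after reordering so that consecutive terms pair up): it rewrites this partial sum as a combination $\sum_k t'_k S(h_{i_k}, h_{i_{k+1}}) + t'_m h_{i_m}$ where each $S$-polynomial is multiplied by a term $t'_k$ with $\val(t'_k) + \max(\val(h_{i_k}),\val(h_{i_{k+1}})) \geq \val(t)$ and $\val(t'_m h_{i_m}) > \val(t)$. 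Now invoke the hypothesis: each $S(h_{i_k},h_{i_{k+1}})$ divides to zero by Algorithm~\ref{algo:reduction}, which by Proposition~\ref{prop:division} means $S(h_{i_k},h_{i_{k+1}}) = \sum_j q^{(k)}_j h_j$ with $\LT(q^{(k)}_j h_j) \leq \LT(S(h_{i_k},h_{i_{k+1}})) < \LT(\tau_{i_k} h_{i_k})$ (the last inequality being the defining property of an $S$-polynomial: cancellation of the leading terms). Substituting these expressions back produces a new representation $f = \sum_i p'_i h_i$ in which every summand $\LT(p'_i) h_i$ is strictly $\leq t$, and in fact the ``bad'' part at level $t$ has been eliminated; the valuation bookkeeping from Lemma~\ref{lem:cancel_and_Spol} guarantees that the newly introduced terms do not recreate the maximum $t$. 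So the new representation has a strictly smaller max, contradicting minimality.

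\textbf{The main obstacle.} The delicate point -- and the reason the hypothesis $\r = (0,\dots,0)$ is in force -- is that $\leq$ is \emph{not} well-founded: an infinite strictly decreasing chain of candidate maxima $t > t' > t'' > \cdots$ could a priori exist, so one cannot simply ``choose a minimal representation.'' This is precisely where Lemma~\ref{lem:strictly_dec_seq} must be used: any strictly decreasing sequence of terms has valuations tending to $+\infty$. Concretely, rather than minimizing over all representations, I would argue as follows. Start from any representation; repeatedly apply the cancellation-and-substitution step above. This produces a sequence of representations whose associated maxima $t^{(0)} \geq t^{(1)} \geq t^{(2)} \geq \cdots$ are nonincreasing, and strictly decreasing unless we have already exhibited $\LT(f)$ as a multiple of some $\LT(h_i)$. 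If the process never terminates, $\val_\r(t^{(\ell)}) \to +\infty$ by Lemma~\ref{lem:strictly_dec_seq}; but $\val_\r(t^{(\ell)}) \geq \val_\r(f)$ for all $\ell$ because $\LT(f) \leq t^{(\ell)}$, and more importantly the summands at each stage stay bounded below in valuation, so passing to the limit (all series converge in the complete algebra $\KX$ by the same convergence argument as in Proposition~\ref{prop:GBgen} and Proposition~\ref{prop:division}) we obtain a representation in which $\LT(f)$ itself is the maximum -- whence $\LT(f)$ is divisible by some $\LT(h_i)$. Either way $\LT(f) \in \langle \LT(h_1),\dots,\LT(h_s)\rangle$, so $(h_1,\dots,h_s)$ is a GB of $I$. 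Making the convergence/limit argument fully rigorous -- ensuring the iterated substitutions assemble into convergent series and that the limit representation genuinely has the claimed leading term -- is the part requiring the most care.
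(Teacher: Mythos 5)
Your argument is essentially the paper's: the same descent on the maximal term $t=\max_i \LT(q_ih_i)$ of a representation $f=\sum q_ih_i$, the same use of Lemma~\ref{lem:cancel_and_Spol} to convert the cancelling top-level part into S-polynomials, and the same substitution of their reductions to zero to strictly decrease $t$. The one place you diverge is in handling the non-well-foundedness of $\leq$, and there you make things harder than necessary because of a reversed inequality: since the first clause of the order is $\val$-\emph{decreasing}, $\LT(f)\leq t^{(\ell)}$ gives $\val(t^{(\ell)})\leq\val(\LT(f))=\val(f)$, i.e.\ the valuations of the successive maxima are bounded \emph{above}, not below. Combined with Lemma~\ref{lem:strictly_dec_seq} (a strictly decreasing sequence of terms has valuation tending to $+\infty$), this immediately shows that the strictly decreasing chain $t^{(0)}>t^{(1)}>\cdots$ must be finite, so the descent terminates after finitely many steps --- equivalently, the minimum of $t$ over all representations is attained, which is the justification (left implicit in the paper) for its phrase ``we can assume $t$ is minimal.'' Your proposed passage to a limiting representation is therefore not needed, and as written it rests on the wrong inequality $\val(t^{(\ell)})\geq\val(f)$; I would replace that paragraph by the two-line boundedness argument above. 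With that fix the proof is complete and matches the paper's.
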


\begin{proof}
The ``only if'' part follows from Proposition~\ref{prop:divisionunicity}.
We prove the ``if'' part.
Let us assume by contradiction
that there exists some $f \in I$
such that $LT(f) \notin \left\langle LT(h_i) \right\rangle$.
We can write $f = \sum_i q_i h_i$ with $q_i \in \KX$ (resp. $q_i \in
\KzX$). Define $t = \max_i \LT(q_i h_i)$.
We have $LT(f) < t$ because of the hypothesis that $LT(f) \notin 
\left\langle LT(h_i) \right\rangle$. We can moreover assume that
the decomposition $f = \sum_i q_i h_i$ is chosen in such a way
that $t$ is minimal.

Let $J$ be the set of indices $i$ for which $\LT(q_i h_i) = a{\cdot}t$ for
some $a \in (\Kz)^\times$.
Set $t_i = LT(q_i)$ for $i \in J$ and define
$h =\sum_{i \in J} t_i h_i$; we have $\LT(h) < t$.
Applying Lemma \ref{lem:cancel_and_Spol}, we find $j_0 \in J$ and
terms $t'$, $t'_{j,k}$ (for $j,k \in J$) such that:
\[
h = \sum_{j,k \in J} t'_{j,k} S(h_j,h_k) + t' h_{j_0}\]
and $\val(t' h_{j_0}) > \val(h)$, $\val(t'_{j,k}) + \min(\val(h_j),
\val(h_j)) \geq \val(h)$.
Applying Proposition~\ref{prop:division} with the S-polynomials,
and using the fact that the leading terms
of the summands in an S-polynomial cancel out, 
we get $b_1, \ldots, b_m \in \KX$ such that 
$h = \sum_{i=1}^m b_i h_i$
and $LT(b_i h_i)<t$ for all $i$.
Therefore, we find that $f$ can be written as
$f=\sum_{i \in \mathbf{i}} q'_i h_i$ with $q'_1, \ldots, q'_m
\in \KX$ and $LT(q_i' h_i)<t$ for all~$i$. This contradicts the
minimality of~$t$.
\end{proof}

\subsubsection*{Buchberger's algorithm.}

After Theorem~\ref{theo:Buchberger_criterion}, it is easy to design a 
Buchberger type algorithm for computing GB over $\KX$ and $\KzX$. It is 
Algorithm~\ref{algo:buchberger}.
\begin{algorithm} 
  \SetKwInOut{Input}{input}\SetKwInOut{Output}{output}
  \Input{$f_1,\dots,f_m$ in $\KX$ (resp. in $\KzX$)}
  \Output{a GB $G$ of the ideal of $\KX$ (resp. of $\KzX$) 
  generated by the $f_i$'s}

  $G \leftarrow \{f_{1},\dots,f_{m}\}$;\,
  $B \leftarrow \{ (f_i, f_j), 1 \leq i < j \leq m \}$\;
  \While{$B \neq \emptyset$}{
    $(f,g) \leftarrow $ element of $B$; $B \leftarrow B \setminus \{(f,g)\}$\;
    $h \leftarrow$ $S$-polynomial of $f$ and $g$\;
    $\_, r \leftarrow \texttt{division}(h, G)$\;
    \If{$r \neq 0$}{
      $B \leftarrow B \cup \{(g,r) \text{ for } g \in G\}$;\,
      $G \leftarrow G \cup \{r\}$
    }
  }
  \textbf{Return $G$}
  
  \caption{Buchberger's algorithm} \label{algo:buchberger}
\end{algorithm}
Studying its termination is a bit subtle. Indeed, we have already
seen that Algorithm~\ref{algo:reduction} does not terminate in
general when we are working at infinite precision. Therefore, 
Algorithm~\ref{algo:buchberger} does not terminate either (since
it calls Algorithm~\ref{algo:reduction} on line~5).
Nevertheless, one may observe that if, instead of calling
Algorithm~\ref{algo:reduction}, we ask the reduced form of $h$ 
modulo $G$ to an oracle that answers instantly, then 
Algorithm~\ref{algo:buchberger} does terminate. 
In other terms, the only source of possible infinite loops in 
Algorithm~\ref{algo:buchberger} comes from 
Algorithm~\ref{algo:reduction}.

Of course, this point of view is purely theoretical and not
satisfying in practice. 
In practice, the coefficients of $f_1, \ldots, f_m$ are given at 
finite precision, \emph{i.e.} modulo $\pi^N$ for some integer $N$,
and all the computations are carried out at finite precision.
In this setting, we have seen that Algorithm~\ref{algo:reduction}
does terminate, so Algorithm~\ref{algo:buchberger} also terminates.
The counterpart is that it is \emph{a priori} not clear that the 
result output by Algorithm~\ref{algo:buchberger} is a correct 
approximation of a GB of the ideal we started with.
Nevertheless, in the case of $\KzX$, this property holds true as
precised by the following theorem.

\begin{theo}
\label{theo:prec}
Let $I$ be an ideal of $\KzX$
and let $(f_1, \ldots, f_m)$ be a generating family of $I$.
Let also $N$ be an integer such that $N > \val(t)$ for all 
$t \in \Skel(\LT(I))$.

When Algorithm~\ref{algo:buchberger} is called with 
$f_1 + O(\pi^N), \ldots, f_m + O(\pi^N)$, it outputs
$G = (g_1, \ldots, g_s)$ with the following properties:

\noindent
(1)~each $g_i$ is known at precision at least $O(\pi^N)$, and

\noindent
(2)~$G$ is the approximation of an actual GB of $I$.
\end{theo}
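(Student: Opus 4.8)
The plan is to track precision through the algorithm, using the hypothesis $N > \val(t)$ for all $t \in \Skel(\LT(I))$ as the "budget" that prevents any loss of information. First I would set up the right framework: working at precision $O(\pi^N)$ means every series is known modulo $\pi^N$, which (after the change of variables, since here $\r = 0$) amounts to computing in the polynomial ring $(\Kz/\pi^N\Kz)[\X]$. The key structural observation is that all terms $t$ with $\val(t) \geq N$ are invisible at the working precision, so the term order $\leq$ restricted to visible terms is genuinely a \emph{well-order} (by Lemma~\ref{lem:strictly_dec_seq}, a strictly decreasing sequence has valuations tending to $+\infty$, hence eventually exceeding $N$; so only finitely many visible terms occur). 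This is exactly why Algorithm~\ref{algo:reduction}, and hence Algorithm~\ref{algo:buchberger}, terminates at finite precision, and it is the engine behind correctness too.

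Next I would prove the correctness statement~(2), that $G$ approximates an actual GB of $I$. The idea is to run Algorithm~\ref{algo:buchberger} in parallel on the exact inputs $f_1, \ldots, f_m$ (with the oracle-based reduction, which terminates as noted in the text) and on the truncated inputs $f_i + O(\pi^N)$, and to show by induction on the steps that the two runs stay "congruent mod $\pi^N$": the sets of S-pairs considered are in bijection, and at each step the computed remainder $r$ agrees mod $\pi^N$ with the corresponding exact remainder $\tilde r$. The crucial point is that the reduction process (Algorithm~\ref{algo:reduction}) only ever divides by leading terms and subtracts; by Remark~\ref{rem:finiteprec} one chooses \emph{exact} multipliers $t$ with $\LT(f) = t\cdot\LT(h_i)$ at the working precision, so no division of coefficients --- and hence no precision loss --- occurs, and each subtraction is congruence-preserving. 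Since $\val(\LT(h_i)) < N$ for every $h_i$ that gets appended to $G$ (its leading monomial lies in $\Skel(\LT(I))$ up to the generators, whose leading valuations are $\leq$ those of the generators... here one must be slightly careful and instead argue that leading terms of elements produced never have valuation $\geq N$ because they would then be indistinguishable from $0$), the multipliers are all invertible terms times monomials, and the whole computation is "flat" in precision. Thus the exact run and the approximate run produce the same sequence of leading terms and hence the same combinatorial structure, and the approximate $g_i$'s are truncations of the exact $\tilde g_i$'s mod $\pi^N$. Since the exact run outputs a genuine GB of $I$ by Theorem~\ref{theo:Buchberger_criterion}, statement~(2) follows.

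Statement~(1), that each $g_i$ is known to precision at least $O(\pi^N)$, then falls out of the same bookkeeping: the initial $f_i + O(\pi^N)$ have precision exactly $N$, and every subsequent operation --- multiplication by an exact term and subtraction --- preserves the property of being known mod $\pi^N$ (again using Remark~\ref{rem:finiteprec} to avoid the coefficient divisions that would degrade precision). So no $g_i$ ever drops below precision $O(\pi^N)$.

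The main obstacle I anticipate is making the "no precision loss" claim fully rigorous: one must verify that in every S-polynomial computation $S(h_i,h_j) = \frac{\LT(h_j)}{\gcd} h_i - \frac{\LT(h_i)}{\gcd} h_j$ the two multiplier terms $\frac{\LT(h_j)}{\gcd}$ and $\frac{\LT(h_i)}{\gcd}$ can be taken \emph{exact} (i.e.\ with unit coefficient, not merely a unit mod $\pi^N$), and likewise in each reduction step, so that the entire run lives over $\Kz/\pi^N\Kz$ without ever needing to invert a non-unit or truncate. This is where the hypothesis $N > \val(t)$ for $t \in \Skel(\LT(I))$ is essential: it guarantees that all leading terms encountered have valuation $< N$, so their coefficients are units times powers of $\pi$ strictly below the truncation level, and the relevant quotients of leading terms are honest terms in $\TX$ rather than objects only defined modulo $\pi^N$. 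Pinning down this invariant --- "at every point of the computation, all leading terms of current basis elements and S-polynomials have valuation $< N$" --- and checking it is preserved is the technical heart of the argument; once it is in place, both conclusions are immediate.
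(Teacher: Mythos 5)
Your treatment of part~(1) matches the paper's: the only operations are multiplications by \emph{exact} terms (Remark~\ref{rem:finiteprec}) and subtractions, so precision is flat. The problem is with part~(2). Your central device --- running the exact and truncated algorithms in parallel and maintaining the invariant that they stay congruent mod $\pi^N$, with the same S-pairs and the same sequence of leading terms --- does not hold. The failure mode is this: when an S-polynomial is reduced, the exact run may produce a nonzero remainder $r$ with $\val(r)\geq N$, while the truncated run sees $r\equiv 0\pmod{\pi^N}$ and discards the pair. At that moment the two runs diverge: the exact run enlarges $G$ and creates new S-pairs, the approximate run does not, and your claimed bijection of S-pairs breaks down. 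Your parenthetical worry ("one must be slightly careful\ldots") only addresses why the \emph{approximate} run never manipulates leading terms of valuation $\geq N$ (they are literally zero there); it does not repair the comparison with the exact run. Likewise, the "main obstacle" you flag --- exactness of the multipliers in S-polynomials and reductions --- is a non-issue (the $\gcd$ of two terms of valuation $<N$ has valuation $<N$, and Remark~\ref{rem:finiteprec} handles the rest); the real difficulty lies elsewhere.

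The paper avoids the parallel-run comparison entirely. It lifts the output $G$ to elements $\hat g_i\in I$ with $g_i=\hat g_i+O(\pi^N)$, introduces the auxiliary ideal $I_N=I+\pi^N\KzX$, and observes that the reductions to zero modulo $\pi^N$ performed by the algorithm certify, via Buchberger's criterion, that $\hat G_N=(\hat g_1,\dots,\hat g_s,\pi^N)$ is a GB of $I_N$. It then shows $\LT(I_N)=\langle \LT(I),\pi^N\rangle$, so a \emph{minimal} GB $H$ of $I$ yields a minimal GB $H_N=H\cup\{\pi^N\}$ of $I_N$ --- and this is precisely where the hypothesis $N>\val(t)$ for $t\in\Skel(\LT(I))$ is used, to ensure $\pi^N$ does not absorb any skeleton element. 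Comparing $\LT(H_N)\subset\LT(\hat G_N)$ and using $\val(\hat g_i)<N$ gives $\LT(H)\subset\LT(\hat G)$, hence $\hat G$ is a GB of $I$. Your proposal has no counterpart to this descent from $I_N$ to $I$, which is the actual content of part~(2); without it (or some substitute), the argument is incomplete.
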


\begin{proof}
The fact that the precision on the $g_i$'s does not decrease 
follows from the fact that Algorithm~\ref{algo:buchberger} only
performs ``exact'' divisions (\emph{cf} Remark~\ref{rem:finiteprec}).

We now prove~(2). Since the $g_j$'s are obtained as linear combinations 
of the inputs $f_i + O(\pi^N)$, there exist $\hat g_1, \ldots, \hat g_s 
\in I$ such that $g_i = \hat g_i + O(\pi^N)$ for all~$i$.
We set $\hat G = (\hat g_1, \ldots, \hat g_s)$; it is enough to
prove that $\hat G$ is a GB of $I$.

Let $I_N = I + \pi^N \KzX$ and $\hat G_N = (\hat g_1, \ldots, \hat g_s, \pi^N)$.
We claim that $\hat G_N$ is a GB of $I_N$. Since it generates $I_N$, it 
is enough to check Buchberger's criterion. By construction, we know that 
the reduction of $S(\hat g_i, \hat g_j)$ modulo $\hat G$ is a multiple 
of $\pi^N$. Hence $S(\hat g_i, \hat g_j)$ reduces to zero modulo $\hat 
G_N$. On the other hand, it follows from the definition of
S-polynomials that $S(\hat 
g_i, \pi^N)$ is divisible by $\pi^N$; hence it also reduces to $0$ 
modulo $\hat G_N$. The claim is proved.

Let $t \in \LT(I_N)$. Then $t = \LT(f + \pi^N h)$ for some $f \in I$
and some $h \in \KzX$. If $\val(f) < N$, we have $t = \LT(f) \in
\LT(I)$. Otherwise $t$ is a multiple of $\pi^N$. We have then
proved that $\LT(I_N)$ is the ideal generated by $\LT(I)$ and
the term $\pi^N$. This implies that, if
$H$ is a GB of $I$, then $H_N = H \cup \{\pi^N\}$ is a GB of $I_N$.
Moreover by our assumption on $\Skel(\LT(I))$, if $H$ is minimal
then $H_N$ is also.

Choose now a \emph{minimal} GB $H$ of $I$.
From what we have done before and Theorem~\ref{theo:minimalGB}, 
it follows that $\LT(H_N) \subset \LT(\hat G_N)$. 
Besides, since the $g_i$'s do not vanish at 
precision $O(\pi^N)$, we have $\val(\hat g_i) < N$ for all~$i$.
Consequently, 
$\LT(H) \subset \LT(\hat G)$. 
In particular $\LT(\hat G)$ generates $\LT(I)$, and so
$\hat G$ is a GB of $I$.
\end{proof}

In the case of $\KX$, we cannot hope to have similar guarantees.
Indeed, if we ask from the GB of the ideal I generated by $f_1 = X + 
O(\pi^N)$ and $f_2 = X + O(\pi^N)$, the answer might be either $(X)$ if 
$f_1 = f_2 = X$, or $(1)$ if $f_1 = X$ and $f_2 = X + \pi^N$, or many 
other results.
The best we can do is to compute a GB of the fractional ideal of $\KzX$
generated by the $f_i$'s and answer that the obtained result is likely 
a GB of~$I$. In the example considered above, we will end up with the
GB $(X + O(\pi^N))$, which is certainly the more natural result we
may expect.

\subsubsection*{General log-radii.}

We now consider the case of a general $\r \in \QQ^n$.
In this situation, the monoid $\TzX[\r]$ no longer admits $\gcd$'s.
As a basic example, take $\r = (\frac 1 2, \frac 1 2)$ and consider
the terms $t_1 = \pi X_1$ and $t_2 = \pi X_2$. Then $\val_\r(t_1) = \val_\r
(t_2) = \frac 1 2$. So the valuation of $\gcd(t_1,t_2)$ 
should be $\frac 1 2$ as
well, implying that $\gcd(t_1,t_2)$ should be $\sqrt \pi$, which is
not an element of $\TzX[\r]$.
When we are working over $\KX[\r]$, this issue does not happen 
since we can freely multiply by any power of $\pi$. Over $\KX[\r]$,
Algorithm~\ref{algo:buchberger} works and is correct
(althought we have to be careful with the normalization of $\gcd$'s
in order to avoid losses of precision as much as possible).

Let us now focus on the case of $\KzX[\r]$ which is more complicated.
Let $D$ be a common denominator of the coordinates of $\r$, \emph{i.e.} 
$D{\cdot}\r \in \ZZ^n$. We consider the field extension $L = K[\eta]$ 
with $\eta^D = \pi$. The valuation $\val$ extends uniquely to $L$; we 
have $\val(\eta) = \frac 1 D$. We define $\Lz$, $\LX$ and $\LzX$ 
accordingly. Observe that $\Lz = \Kz[\eta]$.
If $D{\cdot}\r = (r_1, \ldots, r_n)$, we have $\LX[\r] = L\{\Y\}$ and 
$\LzX[\r] = L\{\Y\}^\circ$ with $Y_i = \eta^{r_i} X_i$. Moreover the 
valuation $\val_\r$ over $\LX[\r]$ (resp. $\LzX[\r]$) is transformed
into the valuation $\val_0$ over $L\{\Y\}$ (resp. $L\{\Y\}^\circ$). The
above identifications show that there is a good notion of $\gcd$'s 
and S-polynomials over $\LX[\r]$ and $\LzX[\r]$, so that eventually
Algorithm~\ref{algo:buchberger} runs and computes
GB over $\LX[\r]$ and $\LzX[\r]$. Before relating those to GB over 
$\KX[\r]$ and $\KzX[\r]$, we need to examine the shape of the GB 
output by Algorithm~\ref{algo:buchberger}.

Let $\LKX[\r]$ be the subset of $\LX[\r]$ consisting of elements of the 
form $\eta^v f$ for $v \in \NN$ and $f \in \KX[\r]$. Clearly, $\LKX[\r]$ 
is stable by multiplication. Beyond this, one can check that it exhibits 
additional stability properties:

\begin{prop}
\label{prop:LKXstab}
(1)~When Algorithm~\ref{algo:reduction} is called with inputs
$f, h_1, \ldots, h_m \in \LKX[\r]$, it outputs 
$q_1, \ldots, q_m, r \in \LKX[\r]$.

\noindent
(2)~If $f, g \in \LKX[\r]$, then $S(f,g) \in \LKX[\r]$.
\end{prop}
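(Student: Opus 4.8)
The plan is to prove both stability properties by tracking, step by step, the $\eta$-adic structure of the elements produced by the relevant algorithms, using the fact that $\LKX[\r]$ is a monoid stable under multiplication and addition (the latter because $\eta^v f + \eta^w g = \eta^{\min(v,w)}(\eta^{v-\min(v,w)}f + \eta^{w-\min(v,w)}g)$, and the inner factor lies in $\KX[\r]$). The key observation is that the only operations performed by Algorithm~\ref{algo:reduction} are: forming quotients $\LT(f)/\LT(h_i)$ of leading terms, multiplying an $h_i$ by such a quotient, and taking differences; so it suffices to check that each such operation stays inside $\LKX[\r]$ (or, for the quotient terms, inside the monoid $\eta^\NN \TX[\r]$ of \emph{exact} terms of the form $\eta^v\cdot(\text{unit})\cdot\X^\i$).

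For part~(1), I would argue by induction on the loop iterations that every intermediate value $f_j$ and every partial quotient $q_{i,j}$ and remainder $r_j$ lies in $\LKX[\r]$. The base case is the hypothesis. For the inductive step, write $f_j = \eta^{v}\tilde f$ with $\tilde f \in \KX[\r]$ and $h_i = \eta^{w}\tilde h \in \LKX[\r]$: since $\LT(f_j)$ divides-test against $\LT(h_i)$, the quotient term $\LT(f_j)/\LT(h_i)$ has the form $\eta^{v-w}\cdot u\cdot\X^\k$ for an exact term $u\X^\k$ (choosing the representatives as in Remark~\ref{rem:finiteprec}), and $v-w$ may be negative, but multiplying by $h_i$ restores a nonnegative power: $\frac{\LT(f_j)}{\LT(h_i)}h_i = \eta^{v}\cdot u\X^\k\tilde h \in \LKX[\r]$. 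Hence $f_{j+1}$, being a difference of two elements of $\LKX[\r]$, is again in $\LKX[\r]$, and likewise for the updated $q_i$ (the new summand $\eta^{v-w}u\X^\k$ times the running quotient — here one uses that the \emph{final} quotient converges in $\LKX[\r]$ because $\LKX[\r]$ is closed in $\LX[\r]$, or alternatively that its image in $\KX[\r]$ after multiplying out by the appropriate $\eta$ power converges) and for $r_{j+1} = r_j + \LT(f_j)$. Passing to the limit gives $q_1,\dots,q_m,r \in \LKX[\r]$.

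For part~(2), I would simply unwind the definition of the S-polynomial. Writing $f = \eta^{v}\tilde f$ and $g = \eta^{w}\tilde g$ with $\tilde f,\tilde g\in\KX[\r]$, the leading terms are $\LT(f) = \eta^{v}\,\LT(\tilde f)$ and $\LT(g) = \eta^{w}\,\LT(\tilde g)$, and since over $\LX[\r]$ (via the $Y_i = \eta^{r_i}X_i$ identification) $\gcd(\LT(f),\LT(g))$ is a term whose $\eta$-valuation is $\min$ of the two, one checks that $\frac{\LT(g)}{\gcd(\LT(f),\LT(g))}$ is an exact term times a nonnegative power of $\eta$ that, when multiplied into $f = \eta^v\tilde f$, keeps the product in $\LKX[\r]$; the same holds for the second summand. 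Thus $S(f,g)$ is a difference of two elements of $\LKX[\r]$, hence lies in $\LKX[\r]$.

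The main obstacle I anticipate is the bookkeeping in part~(1) around \emph{convergence} of the partial quotients: the partial sums $q_{i,j}$ live in $\LKX[\r]$ individually, but one must verify the limit does too, i.e. that $\LKX[\r]$ is closed under the relevant convergence (or equivalently, that the $\eta$-valuations of the terms added to $q_i$ are bounded below, which follows because all $\LT(f_j)$ have $\val_\r \geq \val_\r(f)$ fixed and the $h_i$ have fixed valuations, so $v - w$ stays bounded below). Once this is pinned down — essentially a uniform lower bound on the power of $\eta$ appearing throughout the execution, coming from the finitely many inputs — the rest is a routine induction, and part~(2) is immediate.
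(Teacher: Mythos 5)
The paper gives no proof of this proposition (it is introduced only with ``one can check''), so there is nothing to compare your argument against; judging it on its own terms, your strategy of tracking the power of $\eta$ through every operation is the right one, and your explicit computations --- e.g.\ that $\frac{\LT(f_j)}{\LT(h_i)}\,h_i = \eta^{v}\cdot u\X^{\mathbf{k}}\,\tilde h$ when $f_j=\eta^{v}\tilde f$ and $h_i=\eta^{w}\tilde h$ with $\tilde f,\tilde h\in\KX[\r]$ --- are correct and contain the heart of the matter.

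There is, however, one genuinely false step that you invoke at every place where a sum or difference is taken: the claim that $\LKX[\r]$ is stable under addition. Your justification writes $\eta^v f+\eta^w g=\eta^{\min(v,w)}\bigl(\eta^{v-\min(v,w)}f+\eta^{w-\min(v,w)}g\bigr)$ and asserts that the inner factor lies in $\KX[\r]$; but $\eta^{|v-w|}g$ has coefficients in $K$ only when $D$ divides $v-w$. Indeed $1+\eta\notin\LKX[\r]$ for $D\ge 2$: the set $\LKX[\r]$ is not an additive group but the finite union of the cosets $\eta^{v}\KX[\r]$ for $0\le v<D$ (negative exponents reduce to this range since $\eta^{D}=\pi\in K$). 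Consequently ``a difference of two elements of $\LKX[\r]$ lies in $\LKX[\r]$'' is not a valid inference, and your convergence criterion for the $q_i$ --- that the $\eta$-exponents of the added terms be bounded below --- is also insufficient for the same reason. What rescues the proof, and what your computations actually establish without your drawing the conclusion from them, is that every addition and every limit occurs inside a \emph{single} coset: all the $f_j$, all the terms sent to $r$, and hence $r$ itself stay in $\eta^{v_0}\KX[\r]$ where $f=\eta^{v_0}\tilde f$; each $q_i$ is built from terms all lying in $\eta^{v_0-w_i}\KX[\r]$; and each coset $\eta^{v}\KX[\r]$ is an additively closed, complete (hence closed) subgroup of $\LX[\r]$, so it also absorbs the limits. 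For part~(2) the same point must be made: the two summands of $S(f,g)$ lie \emph{a priori} in cosets $\eta^{s_1}\KX[\r]$ and $\eta^{s_2}\KX[\r]$, and one must check $s_1\equiv s_2\pmod D$; this holds because both summands have the same leading term $\lcm(\LT(f),\LT(g))$, and the $\eta$-valuation of the leading coefficient of a nonzero element of $\eta^{s}\KX[\r]$ is congruent to $s$ modulo $D$. With the additive-closure claim replaced by this coset bookkeeping, your argument is complete.
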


From Proposition~\ref{prop:LKXstab}, we deduce immediately that,
when Algorithm~\ref{algo:buchberger} is called with inputs
$f_i \in \KX[\r] \subset \LX[\r]$, the GB it outputs consists of
elements of $\LKX[\r]$. The following proposition shows that, after
minimizing this GB, we obtain a GB of the ideal of $\KzX[\r]$ we
started with.

\begin{prop}
Let $I$ be an ideal of $\KzX$.
Let $G$ be a minimal GB of $I {\cdot} \LzX[\r]$.
We assume $G \subset \LKX[\r]$.
Then $G \subset \KX[\r]$ and $G$ is a
minimal GB of $I$.
\end{prop}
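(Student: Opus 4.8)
The goal is to show that a minimal Gröbner basis $G$ of the extended ideal $I\cdot\LzX[\r]$, if it happens to lie in the subset $\LKX[\r]$, actually consists of elements of $\KX[\r]$ and is a minimal GB of $I$ itself (as an ideal of $\KzX$, or rather its natural incarnation over $\KzX[\r]$). The plan is to attack the two assertions in turn. First I would establish the inclusion $G \subset \KX[\r]$: by hypothesis each $g \in G$ is of the form $\eta^v f$ with $v \in \NN$ and $f \in \KX[\r]$; the point is that minimality forces $v$ to be a multiple of $D$, so that $\eta^v = \pi^{v/D}$ and hence $g = \pi^{v/D} f \in \KX[\r]$. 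The reason is that the image of $\LT(g)$ in $\TTzX[\r]$ is an element of $\Skel(\LT(I\cdot\LzX[\r]))$; if $v$ were not a multiple of $D$, one could replace $g$ by $\eta^{-1} g$ (which still lies in $\LzX[\r]$ provided $\val_0(g) \geq 1$, and one can always arrange this by rescaling since $I \cdot \LzX[\r]$ is generated by elements of $\KX[\r]$ whose leading terms have integer Gauss valuation) and obtain a strictly smaller leading term dividing the old one, contradicting minimality of the skeleton element. Made precise, this shows $\val_0(g) \in \ZZ$ for each $g$, and then since $g = \eta^v f$ with $f \in \KX[\r]$ having $\val_0(f) \in \ZZ$ we must have $v \equiv 0 \pmod D$, i.e. $g \in \KX[\r]$.

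Next I would prove that $G$ is a GB of $I$ over $\KzX[\r]$. The containment $G \subset I \cdot \LzX[\r] \cap \KX[\r]$ needs to be upgraded to $G \subset I \cdot \KzX[\r]$ — in fact $G \subset I$ after the identification; this follows because each $g \in G$ is produced by Algorithm~\ref{algo:buchberger} run on inputs in $\KX[\r]$, hence by Proposition~\ref{prop:LKXstab} is an $\LKX[\r]$-combination of the $f_i$, and combined with $g \in \KX[\r]$ from the previous paragraph one checks the combination can be taken with coefficients in $\KzX[\r]$. For the Gröbner property, let $f \in I$ be nonzero; then $f \in I \cdot \LzX[\r]$, so $\LT(f)$ (computed with $\val_0$ over $\LzX[\r]$) is divisible in $\TzX[\r]^{(L)}$ by some $\LT(g_i)$. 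I would then observe that both $\LT(f)$ and $\LT(g_i)$ already lie in the monoid of terms over $K$ (their coefficients are in $K^\times$, up to a unit, and the $\X$-exponents are the same whether we work over $K$ or $L$), and that divisibility of a term with coefficient in $\Kz$ by a term with coefficient in $\Kz$ is unaffected by passing between $\TzX[\r]$ and its $L$-analogue — the quotient term $\LT(f)/\LT(g_i)$ has $\X$-exponent in $\NN^n$ and $\val_0 \geq 0$, and since both numerator and denominator have $\val_0 \in \frac1D\NN \cap \ZZ = \NN$ (by the integrality just established, applied also to $f$ after rescaling) the quotient has integer Gauss valuation, hence is a genuine term of $\TzX[\r]$. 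Thus $\LT(g_i)$ divides $\LT(f)$ in $\TzX[\r]$, proving $\LT(I)$ is generated by the $\LT(g_i)$'s, i.e. $G$ is a GB of $I$.

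Finally, minimality: the images of the $\LT(g_i)$ in $\TTzX[\r]^{(L)}$ are exactly the elements of $\Skel(\LT(I \cdot \LzX[\r]))$ with no repetition, by hypothesis. Restricting the isomorphism-type description of these skeletons (Lemma~\ref{lem:monterms}, applied over $L$ and over $K$) and using that $I \cdot \LzX[\r] = I \cdot \KzX[\r] \otimes \LzX[\r]$ is ``defined over $K$'', I would argue that $\Skel(\LT(I \cdot \LzX[\r])) = \Skel(\LT(I))$ under the identification of the relevant term monoids — the integrality of Gauss valuations on leading terms is again what makes the two skeletons coincide rather than the $L$-skeleton being strictly finer. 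Then the $\LT(g_i)$ range without repetition over $\Skel(\LT(I))$, which is precisely minimality of $G$ as a GB of $I$. The main obstacle I anticipate is the bookkeeping in the first paragraph: cleanly justifying that minimality of the $L$-skeleton forces the $\eta$-exponents to be multiples of $D$ requires being careful about normalization (one must first reduce to the case $\val_0(g_i) \in \ZZ$, using that the generators $f_i$ of $I$ can be taken with $\val_0(f_i) = 0$, or at least integral), and the same integrality fact is then reused in paragraphs two and three, so getting the statement and its proof in the right logical order is the delicate point.
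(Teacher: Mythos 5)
Your proof has a genuine gap at its central step. To show $G \subset \KX[\r]$ you argue that if $\LT(g) = \eta^v a\X^\i$ with $D \nmid v$, then $\eta^{-1}g$ still lies in the ideal and its leading term strictly divides $\LT(g)$, contradicting minimality. But $\eta^{-1} \notin \LzX[\r]$, so $\eta^{-1}g$ need \emph{not} belong to $I_L := I\cdot\LzX[\r]$ (take $I = (\pi)$: then $\eta^{-1}\pi = \eta^{D-1}$ is integral but is not in $\pi\LzX[\r]$), hence its leading term is not known to lie in $\LT(I_L)$ and no contradiction with minimality of the skeleton is obtained. The paper's proof avoids this by first establishing the structural identity $\LT(I_L) = \eta^{\NN}\LT(I)$ together with $I = I_L \cap \KX[\r]$, using the decomposition of any $f \in I_L$ as $f = f_0 + \eta f_1 + \cdots + \eta^{D-1}f_{D-1}$ with $f_j \in \KX[\r]$ (the valuations of the $\eta^j f_j$ live in distinct classes, so $\LT(f)$ equals some $\eta^{j}\LT(f_{j})$). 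Minimality of $\LT(g)$ inside $\eta^{\NN}\LT(I)$ then forces $\LT(g) \in \LT(I)$ --- the strictly smaller divisor one exhibits is an honest element of $\LT(I) \subset \LT(I_L)$, not $\LT(\eta^{-1}g)$. Your third paragraph in effect assumes this identity (``the two skeletons coincide''), but that is precisely what has to be proved; without it both the first claim and the minimality claim are circular.

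Two further problems. First, the invariant you track is off: the conclusion $g \in \KX[\r]$ is equivalent to the leading \emph{coefficient} $\eta^v a$ lying in $K$, not to $\val_\r(g) \in \ZZ$ (for $\r = (\tfrac12)$ the element $\pi X \in \KX[\r]$ has Gauss valuation $\tfrac12$), so reducing to ``integer Gauss valuation'' does not give what you need. Second, to get $G \subset I$ you appeal to the provenance of $G$ from Buchberger's algorithm and Proposition~\ref{prop:LKXstab}; the proposition, however, concerns an arbitrary minimal GB of $I_L$ contained in $\LKX[\r]$, so membership must come from the equality $I = I_L \cap \KX[\r]$ rather than from how $G$ was computed.
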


\begin{proof}
Write $I_L = I {\cdot} \LzX[\r]$. We claim that:
\begin{equation}
\label{eq:LTIL}
\LT(I_L) = \eta^\N \LT(I)
\quad \text{and} \quad
I = I_L \cap \KX[\r].
\end{equation}
The inclusion $\eta^\N \LT(I) \subset \LT(I_L)$ is clear. As for
the reverse inclusion, it follows from the fact that
any $f \in I_L$ can be decomposed as
$f = f_0 + \eta f_1 + \cdots + \eta^{D-1} f_{D-1}$ with $f_i \in
\KX[\r]$ for all~$i$. Set $J = I_L \cap \KX[\r]$. From
$\LT(I_L) = \eta^\N \LT(I)$, we deduce $\LT(I) = \LT(J)$. Since
moreover $J$ obviously contains $I$, we find $I = J$.

Let $g \in G$. Write $\LT(g) = \eta^v a \X^\i$ with $v \in \NN$, $a \in
K^\times$ and $\i \in \N^n$. Since $G$ is a minimal GB of $I_L$, we
know that $\LT(g)$ is minimal in $\LT(I_L)$. From
Eq.~\eqref{eq:LTIL}, we deduce that $\LT(g) \in \TX[\r]$, that is
$\eta^v a \in K$. Thus $\eta^v \in K$ and $g \in \KX[\r]$ as claimed. 
The fact that $G$ is a minimal GB of $I$ follows again from 
Eq.~\eqref{eq:LTIL}.
\end{proof}

To conclude this section, we underline that \emph{all} computations 
(\emph{i.e.} Algorithm~\ref{algo:reduction} and the computation of 
S-polynomials) can be carried out within $\LKX[\r]$, representing an 
element of this set as a pair $(v,f)$ with $v \in \N$ and $f \in 
\KX[\r]$.
This strategy avoids constructing and working in the field $L$.

\subsection{F4 algorithm}
\label{ssec:F4}

In the history of the computation of
Gröbner bases, the development 
of Faugère's F4 algorithm \cite{F99}
has been a decisive cornerstone towards
faster algorithms.
In this section, we adjust its strategy
to the computation of Gröbner bases
over Tate algebras.
We restrict ourselves to $\r = 0$, keeping in mind that the case 
of general log-radii can be reached using the techniques discussed
at the end of \S\ref{ssec:buchberger}.

Roughly, the F4 algorithm is an adaptation
of Buchberger's algorithm such that
all S-polynomials of a given degree
are processed and reduced together in a big 
matrix of polynomials, along with their reducers.
The algorithm carries on the computation
until there is no S-polynomials to reduce.
Over Tate algebras, there is no degree
as for polynomials.
However, we can use instead
the degree of the $\lcm$ of 
the leading terms of an $S$-pair.

The F4 strategy can be then summed-up as follows:

\noindent
(1)~Collect all S-pairs sharing the smallest degree
for the $\lcm$ of their leading terms, and prepare their reduction (Algorithm~\ref{algo:SymbPreproc}).

\noindent
(2)~Reduce them all together (Algorithm~\ref{algo:matrix_reduction}).

\noindent
(3)~Update the GB in construction and list of $S$-pairs
according to the result of the previous reduction.

\noindent
(4)~Carry on the previous steps until there is
no S-pair remaining.

\noindent
The main algorithm is Algorithm \ref{algo:F4_algo_complete},
with Algorithms~\ref{algo:matrix_reduction} and~\ref{algo:SymbPreproc}
as subroutines.

\newcommand{\algoskip}{\vspace{1.9mm}}
\SetAlgoSkip{algoskip}

\begin{algorithm} 
\DontPrintSemicolon

 \SetKwInOut{Input}{input}\SetKwInOut{Output}{output}

 \Input{a matrix $M$,\\
        a list of monomials $\mon$ indexing the col. of $M$}
 \Output{the $U$-part of the Tate LUP-form of $M$}

\lIf{$M$ has no non-zero entry}{Return $M$;}
\textbf{Find} $i,j$ s.t. $M_{i,j}$ has the greatest term $M_{i,j} x^{\mon_j}$ for $\leq$;\;
\textbf{Swap} the columns $1$ and $j$ of $M$;\;
\textbf{Swap} the entries $1$ and $j$ of $\mon$;\;
\textbf{Swap} the rows $1$ and $i$ of $M$;\;
By \textbf{pivoting} with the first row, eliminates the coefficients of the other rows on the first column;\;
\textbf{Proceed recursively} on the submatrix $M_{i \geq 2, j \geq 2}$;\;
\textbf{Return} $M$;\;

 \caption{\texttt{TateRowReduction}\label{algo:matrix_reduction}}
\end{algorithm}

\begin{algorithm} 
\SetKwInOut{Input}{input}\SetKwInOut{Output}{output}
\SetKwComment{tcp}{\rm \color{comment} \#\, }{}
\Input{a list $P$ of pairs of elements of $\KX$ (resp. of $\KzX$),\\
 a list $G$ of elements in $\KX$ (resp. in $\KzX$).}
\Output{a matrix $M$}
$U \leftarrow $ the series in $P$\;
$C \leftarrow \bigcup_{f \in U} \{\text{terms of } f\}$\;
$\mathfrak A \leftarrow K$ (resp. $\mathfrak A \leftarrow \Kz$);\,
$D \leftarrow \emptyset $\;
\While{$\mathfrak A {\cdot} C \neq \mathfrak A {\cdot} D$}{
$t \leftarrow \max \, \{ t\in C, \, t \not\in \mathfrak A {\cdot} D\}$\;
$D \leftarrow D \cup \{ t \}$\;
$V \leftarrow \big\{\big(g, \frac{t}{LT(g)}\big) \text{ for } g \in G \text{ s.t. }
LT(g) \mid t \big\}$\;
\If{$V \neq \emptyset$}
{
$(g, \delta) \leftarrow$ the element $(g, \delta)$ of $V$ with maximal
$\LT(\delta{\cdot}g)$, with tie-breaking by taking minimal $\delta$ 
(for degree then for $\leq_\omega$)\; 
$U \leftarrow U \cup \{ \delta{\cdot}g \}$\;
$C \leftarrow C \cup \{ \text{terms of } \delta{\cdot}g\}$\;
}
}
$M \leftarrow$ the series of $U,$ written in a matrix of series\;			
\textbf{Return} $M$\; 
 \caption{\texttt{Symbolic-Preprocessing}} \label{algo:SymbPreproc}
\end{algorithm}

\begin{algorithm} 
\DontPrintSemicolon

 \SetKwInOut{Input}{input}\SetKwInOut{Output}{output}

  \Input{$f_1,\dots,f_m$ in $\KX$ (resp. in $\KzX$)}
  \Output{a GB $G$ of the ideal of $\KX$ (resp. of $\KzX$) 
  generated by the $f_i$'s}

$G \leftarrow (f_1,\dots, f_m)$;\;
$B \leftarrow \{ (f_i,f_j), \, 1 \leq i < j \leq m \}$;\;
\While{$B \neq \emptyset$}{
$d \leftarrow \min_{(u,v) \in B} \deg \lcm(\LT(u),\LT(v))$;\;
$P$ \textbf{receives} the pop of the pairs of degree $d$ in $B$;\;
$M \leftarrow \texttt{Symbolic-Preprocessing}(P,G)$;\;
$M \leftarrow \texttt{TateRowReduction}(M)$;\;
\textbf{Add} to $G$ all the polynomials obtained from $M$ that provide leading terms not in $\left\langle \left\lbrace  LT(g) \text{ for } g \in G \right\rbrace \right\rangle$;\;
\textbf{Add} to $B$ the corresponding new pairs;\;
}			
\textbf{Return} $G$;\; 

 \caption{F4 algorithm} \label{algo:F4_algo_complete}
\end{algorithm}

\begin{lem}
At finite precision,
Algorithm \ref{algo:SymbPreproc} terminates
in a finite number of steps, 
and the output $M$
has a finite number of rows. \label{lem:Symb_Preproc_terminates}
\end{lem}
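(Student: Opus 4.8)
The plan is to reduce the statement to the topological well-foundedness of the term preorder (Lemma~\ref{lem:strictly_dec_seq}) combined with the finite precision hypothesis. Fix a positive integer $N$ such that every series handled by the algorithm has its coefficients known modulo $\pi^N$. By the growth condition in Definition~\ref{def:KXr}, each such series then has only finitely many nonzero terms, so at any stage before termination the sets $U$ and $C$ are finite and the term $t$ selected inside the \textbf{while} loop is well defined. Moreover any nonzero term $a\X^\i$ occurring during the execution satisfies $\val(a) < N$, that is $\val_0(a\X^\i) < N$ since $\r = (0,\dots,0)$; in particular every element of $C$ has $\val_0 < N$ throughout the run. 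Since each pass through the loop appends at most one new series to $U$ (a reducer $\delta{\cdot}g$), it suffices to bound the number of iterations; this bound also controls the number of rows of the output matrix $M$.

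The core of the argument is to track the ``frontier'' of terms still to be processed. For the $k$-th iteration, let $m_k$ be the term $t$ selected inside the loop, i.e.\ the largest element for $\leq$ of the set $\{t \in C \text{ s.t.\ } t \notin \mathfrak A{\cdot}D\}$, computed with the values of $C$ and $D$ at the start of that iteration; this set is nonempty precisely because the loop condition $\mathfrak A{\cdot}C \neq \mathfrak A{\cdot}D$ holds while $D \subseteq C$. I would then prove $m_{k+1} < m_k$ strictly, via three observations: during iteration~$k$ the term $m_k$ (and with it every $(\Kz)^\times$-multiple of $m_k$) is added to $D$ and so leaves the frontier; the only new terms entering $C$ during iteration~$k$ are the terms of the reducer $\delta{\cdot}g$ with $\delta = m_k/\LT(g)$, and since $\leq$ is compatible with multiplication by terms we have $\LT(\delta{\cdot}g) = \delta{\cdot}\LT(g) = m_k$ while every other term of $\delta{\cdot}g$ is $\delta{\cdot}u$ for a non-leading term $u < \LT(g)$ of $g$, hence is $< m_k$; and the frontier elements present before iteration~$k$ other than $m_k$ were already $< m_k$ by maximality. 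Hence every element of the frontier at the start of iteration~$k{+}1$ is $< m_k$, so $m_{k+1} < m_k$.

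To conclude, if the \textbf{while} loop ran forever, the $m_k$'s would form an infinite strictly decreasing sequence in $\TX$ (resp.\ in $\TzX$); by Lemma~\ref{lem:strictly_dec_seq} this would force $\val_0(m_k) \to +\infty$, contradicting the uniform bound $\val_0(m_k) < N$. Hence the loop runs finitely many times, so Algorithm~\ref{algo:SymbPreproc} terminates and $U$, hence $M$, has finitely many rows.

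The step I expect to be the main obstacle is making ``the frontier decreases'' precise: although $C$ grows during the execution, $\max\{t \in C : t \notin \mathfrak A{\cdot}D\}$ must strictly decrease at every iteration. Handling this cleanly requires being careful with the meaning of $\mathfrak A{\cdot}C$ versus $\mathfrak A{\cdot}D$ and with the fact that $\leq$ is only a preorder (terms differing by a unit of $(\Kz)^\times$ being identified). The other ingredients — the finite-precision bound on valuations and the appeal to Lemma~\ref{lem:strictly_dec_seq} — are routine.
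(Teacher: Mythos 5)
Your proof is correct and follows essentially the same route as the paper's: you show that the terms $t$ selected in successive iterations of the \textbf{while} loop form a strictly decreasing sequence (because $t$ enters $D$, and the only new terms contributed by the reducer $\delta{\cdot}g$ are strictly smaller than $t$ apart from its leading term), and then invoke Lemma~\ref{lem:strictly_dec_seq} together with the finite-precision bound on valuations to rule out an infinite run. Your write-up is somewhat more detailed than the paper's (in particular the explicit valuation bound $\val_0 < N$ and the careful handling of the frontier modulo units), but the underlying argument is identical.
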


\begin{proof}
We remark that the sequence formed by the elements $t$'s 
considered n the while loop is strictly decreasing.
Indeed, we notice first that $t$ is added to $D$ on line 6,
so it cannot reappear later.
Then, if $V$ is not empty, 
all the terms of $\delta{\cdot}g$
on line 11 are strictly smaller than $t$,
except its leading term which is $t$.
At finite precision, 
there is no infinite strictly decreasing sequence by
Lemma~\ref{lem:strictly_dec_seq}.
Consequently, Algorithm~\ref{algo:SymbPreproc} terminates
in a finite number of steps.
\end{proof}

\begin{prop}
Under the same hypotheses as in Theorem \ref{theo:prec},
Algorithm \ref{algo:F4_algo_complete} outputs
$G$ satisfying the same conclusions.
\end{prop}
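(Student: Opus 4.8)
The plan is to run the proof of Theorem~\ref{theo:prec} essentially unchanged, isolating the two places where the linear-algebra machinery of F4 must be shown to behave like the division routine Algorithm~\ref{algo:reduction} and like Buchberger's algorithm. We stay in the $\KzX$ setting throughout, as in Theorem~\ref{theo:prec}.

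For conclusion~(1), I would check that Algorithm~\ref{algo:F4_algo_complete} only performs precision-preserving operations. The only non-obvious step is the pivoting inside \texttt{TateRowReduction}: as in Remark~\ref{rem:finiteprec}, it should be carried out by multiplying the pivot row by an \emph{exact} term realizing the cancellation at the working precision, so that no precision is lost; forming $\lcm$'s, S-pair shifts, row additions and swaps are harmless too. Hence each output $g_i$ is still known modulo~$\pi^N$.

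For conclusion~(2), I would reproduce the skeleton of the argument in Theorem~\ref{theo:prec}: each $g_j$ being a $\KzX$-linear combination of the inputs $f_i + O(\pi^N)$, pick $\hat g_j \in I$ with $g_j = \hat g_j + O(\pi^N)$, set $\hat G = (\hat g_1,\dots,\hat g_s)$, $\hat G_N = (\hat g_1,\dots,\hat g_s,\pi^N)$ and $I_N = I + \pi^N\KzX$, and aim to show $\hat G_N$ is a GB of $I_N$ by Buchberger's criterion (Theorem~\ref{theo:Buchberger_criterion}). Once this is established, the passage from ``$\hat G_N$ is a (minimal) GB of $I_N$'' to ``$\hat G$ is a GB of~$I$'' is word-for-word the end of the proof of Theorem~\ref{theo:prec}, using $\LT(I_N) = \langle \LT(I),\pi^N\rangle$, the hypothesis $N > \val(t)$ for $t\in\Skel(\LT(I))$, Theorem~\ref{theo:minimalGB}, and $\val(\hat g_i) < N$.

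The genuinely new point --- and the one I expect to be the main obstacle --- is the analogue of the sentence ``by construction the reduction of $S(\hat g_i,\hat g_j)$ modulo $\hat G$ is a multiple of $\pi^N$''. This requires a correctness lemma for the pair \texttt{Symbolic-Preprocessing}/\texttt{TateRowReduction}: the matrix $M$ built from a batch $P$ of S-pairs contains, beyond the shifted polynomials of the S-pairs, \emph{all} reducers needed --- exactly what the \texttt{while} loop enlarging $D$ until $\mathfrak A\cdot C = \mathfrak A\cdot D$ guarantees --- and \texttt{TateRowReduction}, whose $\leq$-greedy pivot choice is well-defined because $\leq$ is total and, at finite precision, admits no infinite strictly decreasing chain (Lemma~\ref{lem:strictly_dec_seq}), returns a family with the same $\Kz$-linear span from which one reads the leading terms of a generating set of that span's leading-term module. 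Granting this, every $S(g_i,g_j)$ with $(g_i,g_j)\in P$ is, modulo $\pi^N$, a $\Kz$-combination of the output rows; the output rows contributing new leading terms are appended to $G$, and since the main loop runs until $B=\emptyset$, every pair of elements of the final $G$ is processed at some iteration. Therefore $S(g_i,g_j)$ reduces to $0$ modulo the final $G$ when computations are performed modulo $\pi^N$, which lifts to: $S(\hat g_i,\hat g_j)$ reduces modulo $\hat G$ to a multiple of $\pi^N$, as needed. Termination at finite precision, needed to make sense of ``the output $G$'', follows from Lemma~\ref{lem:Symb_Preproc_terminates} together with stabilization of the leading-term ideal (Proposition~\ref{prop:skel}), exactly as for Buchberger's algorithm.
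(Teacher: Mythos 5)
Your proposal is correct and follows essentially the same route as the paper: the paper's proof is in fact much terser, simply invoking Lemma~\ref{lem:Symb_Preproc_terminates} for termination of the subroutines, a no-infinite-ascending-chain-of-leading-term-ideals argument for termination of the main loop, and then stating that correctness ``follows along the same lines as in the proof of Theorem~\ref{theo:prec}'' via the Buchberger criterion. The F4-specific points you isolate --- exact pivoting to preserve precision, and the fact that \texttt{Symbolic-Preprocessing} gathers all needed reducers so that each $S$-pair in a batch reduces modulo $\pi^N$ to a $\Kz$-combination of output rows --- are exactly the details the paper leaves implicit, so your write-up is a faithful (and more explicit) version of the intended argument.
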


\begin{proof}
Thanks to Lemma \ref{lem:Symb_Preproc_terminates},
it is clear that Algorithms~\ref{algo:matrix_reduction} 
and~\ref{algo:SymbPreproc} terminate.
Termination of Algorithm \ref{algo:F4_algo_complete}
can then be proved along the following lines.
If the algorithm did not terminate for some given input,
then it would mean that $B$ (the list of pairs) is never empty.
Hence, there would be an infinite number of times
when new polynomials are added to $G$. From them, we would be 
able to construct a strictly increasing sequence of
monomial ideals inside $\TX$ which are nonzero at the
precision $O(\pi^N)$. This contradicts Lemma~\ref{lem:strictly_dec_seq}.
Finally, thanks to the Buchberger criterion for
Tate algebras (\emph{cf}~Theorem~\ref{theo:Buchberger_criterion}),
the correctness follows along the same lines as in the proof 
of Theorem~\ref{theo:prec}.
\end{proof}

\section{Implementation}
\label{sec:impl}

We have implemented in \sage all the algorithms presented in this 
paper, together with an interface for working with Tate algebras.
Our implementation of Buchberger algorithm (\emph{cf} 
\S\ref{ssec:buchberger}) is now part of the standard distribution of 
\sage since version 8.5.
It is fairly optimized but it is clear that more work need to be
done in this direction: the timings we obtain are far from the
average timings reached by other softwares (as \textsc{singular}) 
for the computation of Gröbner bases over $\ZZ/p^n\ZZ$, whereas 
we could expect them to match, even if the context is a bit 
different.
Our implementation of the F4 algorithm (\emph{cf} 
\S\ref{ssec:F4})
is still a toy implementation, 
which does not exhibit good performances yet; we plan to improve it
in a near future. It is available at:

\smallskip

\noindent
\hfill
\url{https://gist.github.com/TristanVaccon}
\hfill\null

\subsubsection*{Short demo.}

Our implementation provides a constructor for creating Tate algebras, 
called {\color{constructor} \verb?TateAlgebra?}:

\smallskip

{\noindent \small
\begin{tabular}{rl}
\cIn
 & {\color{parent}\verb?K?}\verb? = ?{\color{constructor}\verb?Qp?}\verb?(2, ?{\color{keyword}\verb?prec?}\verb?=5, ?{\color{keyword}\verb?print_mode?}\verb?='?{\color{string}\verb?digits?}\verb?')? \\
 & {\color{parent}\verb?A?}\verb?.<x,y> = ?{\color{constructor}\verb?TateAlgebra?}\verb?(?{\color{parent}\verb?K?}\verb?); ?{\color{parent}\verb?A?} \\
\cOut
 & $\QQ_{2}\{x,y\}$ 
\end{tabular}}

\smallskip

\noindent
We observe that, by default, the log-radii are all zero; the keyword 
{\color{keyword} \verb?log_radii?} can be use to pass in other values. 
Similarly the default order is the one attached to $\omega = 
\text{grevlex}$, but any other order known by \sage can be specified 
\emph{via} the keyword {\color{keyword} \verb?order?}.

The ring of integers of the Tate algebras can be built as follows:

\smallskip

{\noindent \small
\begin{tabular}{rl}
\cIn
 & {\color{parent}\verb?Ao?}\verb? = ?{\color{parent}\verb?A?}\verb?.?{\color{method}\verb?integer_ring?}\verb?(); ?{\color{parent}\verb?Ao?} \\
\cOut
 & $\QQ_{2}\{x,y\}^\circ$
\end{tabular}}

\smallskip

\noindent
We can now create and manipulate elements:

\smallskip

{\noindent \small
\begin{tabular}{rl}
\cIn
 & \verb?f = 2*x^2 + 5*x*y^2? \\
 & \verb?g = 4 + 2*x^2*y? \\
 & \verb?f + g? \\
\cOut
 & $...00101 x y^2 + ...000010 x^2 y + ...000010 x^2 + ...0000100$ \\
\cIn
 & \verb?(1+g).?{\color{method}\verb?inverse_of_unit?}\verb?()? \\
\cOut
 & $...01101 + ...01110x^{2}y + ...10100x^{4}y^{2} + {}$ \\
 & $...11000x^{6}y^{3} + ...10000x^{8}y^{4} + O(2^{5} \: \QQ_{2}\{x,y\}^{\circ})$
\end{tabular}}

\smallskip

\noindent
We observe that, in the outputs, terms are ordered with respect to
the term order on $\TX$, the greatest one coming first.
The big-oh appearing on the last line hides terms which are multiple 
of $2^5$.

\vspace{1cm}

Classical transcendantal functions are also implemented, \emph{e.g.}:

\smallskip

{\noindent \small
\begin{tabular}{rl}
\cIn
 & {\color{method}\verb?log?}\verb?(1+g)? \\
\cOut
 & $...01110x^4y^2 + ...11010x^2y + ...11100x^8y^4 + $ \\
 & $...11100 + ...11000x^6y^3 + O(2^{5} \: \QQ_{2}\{x,y\}^{\circ})$
\end{tabular}}

\smallskip

\noindent
Ideals of $\KX$ can be defined and manipulated as follows:

\smallskip

{\noindent \small
\begin{tabular}{rl}
\cIn
 & {\color{parent}\verb?J?}\verb? = ?{\color{parent}\verb?A?}\verb?.?{\color{method}\verb?ideal?}\verb?([f,g])? \\
 & {\color{parent}\verb?J?}\verb?.?{\color{method}\verb?groebner_basis?}\verb?()? \\
\cOut
 & \verb?[ ? $...0001x^3 + ...1011y + O(2^{4} \: \QQ_{2}\{x,y\}^{\circ})$, \\
 & \verb?  ? $...00001x^2y + ...00010 + O(2^{5} \: \QQ_{2}\{x,y\}^{\circ})$, \\
 & \verb?  ? $...0001y^2 + ...1010x + O(2^{4} \: \QQ_{2}\{x,y\}^{\circ})$ \verb? ]? \\
\cIn
 & {\color{parent}\verb?A?}\verb?.?{\color{method}\verb?random_element?}\verb?()*f + ?{\color{parent}\verb?A?}\verb?.?{\color{method}\verb?random_element?}\verb?()*g ?{\color{keyword}\verb?in?}\verb? ?{\color{parent}\verb?J?} \\
\cOut
 & True \\
\cIn
 & {\color{method}\verb?log?}\verb?(1+g) ?{\color{keyword}\verb?in?}\verb? ?{\color{parent}\verb?J?} \\
\cOut
 & True \\
\end{tabular}}

\smallskip

\noindent
And similarly for ideals of $\KzX$ (observe that no losses of
precision occur this time, in accordance with Theorem~\ref{theo:prec}):

\smallskip

{\noindent \small
\begin{tabular}{rl}
\cIn
 & {\color{parent}\verb?Jo?}\verb? = ?{\color{parent}\verb?Ao?}\verb?.?{\color{method}\verb?ideal?}\verb?([f,g])? \\
 & {\color{parent}\verb?Jo?}\verb?.?{\color{method}\verb?groebner_basis?}\verb?()? \\
\cOut
 & \verb?[ ? $...00001xy^2 + ...11010x^2 + O(2^{5} \: \QQ_{2}\{x,y\}^{\circ})$, \\
 & \verb?  ? $ ...000010x^2y + ...000100 + O(2^{6} \: \QQ_{2}\{x,y\}^{\circ})$, \\
 & \verb?  ? $ ...000100x^3 + ...101100y + O(2^{6} \: \QQ_{2}\{x,y\}^{\circ})$, \\
 & \verb?  ? $ ...000100y^2 + ...101000x + O(2^{6} \: \QQ_{2}\{x,y\}^{\circ})$ \verb? ]? \\
\cIn
 & \verb?g/2 ?{\color{keyword}\verb?in?}\verb? ?{\color{parent}\verb?Jo?} \\
\cOut
 & False \\
\end{tabular}}

\begin{small}

\bibliographystyle{plain}

\end{small}

\end{document}
